\documentclass{article}

\usepackage{amsmath}
\usepackage{amsfonts}
\usepackage{amssymb}
\usepackage{amsthm}
\usepackage{enumerate}
\usepackage{bbm}
\usepackage{hyperref}
\usepackage{verbatim}

\newtheorem{theorem}{Theorem}[section]

\newtheorem{claim}[theorem]{Claim}
\newtheorem{definition}[theorem]{Definition}
\newtheorem{corollary}[theorem]{Corollary}

\newtheorem{conjecture}[theorem]{Conjecture}

\newcommand{\E}{{\rm I\kern-.3em E}}

\begin{document}
\title{On the list recoverability of randomly punctured codes}
\author{
Ben Lund \thanks{Department of Mathematics,  Princeton University. {\tt lund.ben@gmail.com}. Research supported by NSF grant 
DMS-1802787.} 
\and
Aditya Potukuchi \thanks{Department of Computer Science,  Rutgers University. {\tt aditya.potukuchi@cs.rutgers.edu}. Research supported in part by Noga Ron-Zewi's BSF-NSF grant CCF-1814629 and 2017732, by NSF grant  CCF-1350572 and the Simons Collaboration on Algorithms and Geometry}
}

\maketitle

\begin{abstract}
We show that a random puncturing of a code with good distance is list recoverable beyond the Johnson bound.
In particular, this implies that there are Reed-Solomon codes that are list recoverable beyond the Johnson bound.
It was previously known that there are Reed-Solomon codes that do not have this property. 
As an immediate corollary to our main theorem, we obtain better degree bounds on unbalanced expanders that come from Reed-Solomon codes.
\end{abstract}

\section{Introduction}

List recoverable codes were defined by Guruswami and Indyk \cite{guruswami2001expander} in their study of list decodable codes.
Here, we study list recoverable codes in their own right, showing that random puncturings of codes over a sufficiently large alphabet are list recoverable.
Our result is analogous to earlier work by Rudra and Wooters \cite{RW14, RW15} on the list decodability of randomly punctured codes. 

We use $q$ to denote the alphabet size, and $n$ to denote the block length of an arbitrary code $\mathcal{C} \subset [q]^n$.
Given two codewords $c_1, c_2 \in [q]^n$, denote the Hamming distance between $c_1$ and $c_2$ by $\Delta(c_1, c_2)$.
Denote the minimum distance between a codeword $c \in [q]^n$ and a set $\mathcal{L} \subseteq [q]^n$ by $\Delta(c, \mathcal{L})$.

\begin{definition}[List recoverability]
Let $q,n,\ell, L$ be positive integers, and let $0 \leq \rho < 1$ be a real number.
A code $\mathcal{C} \subset [q]^n$ is $(\rho, \ell, L)$ list recoverable if, for every collection of sets $\{A_i \subseteq [q]\}_{i \in [n]}$ with $|A_i| \leq \ell$ for each $i$, we have
\[
\left|\{c \in \mathcal{C}~|~\Delta(c,A_1 \times \cdots \times A_n) \leq \rho n \}\right| \leq L.
\]
\end{definition}

In the above definition, $\ell$ is called the \emph{input list size}, and $L$ is called the \emph{output list size} from which the code can be recovered.
The case $\rho = 0$ is already interesting, and is called {\em zero-error} list recoverability.
We say that a code $\mathcal{C}$ is $(\ell,L)$ zero-error list recoverable if it is $(0, \ell, L)$ list recoverable.

A {\em puncturing} of a code $\mathcal{C} \subset [q]^n$ to a set $S \subset [n]$ is the code $\mathcal{C}_S \subset [q]^S$ defined by $\mathcal{C}_S[i] = \mathcal{C}[i]$ for each $i \in S$. 
A punctured code will typically have higher rate, but lower distance, than the unpunctured version. 
Our main result is that every code over a large enough alphabet $[q]$ can be punctured to a code of rate $R > q^{-1/2}$ while being list recoverable with input and output list sizes roughly $R^{-2}$.

\begin{theorem}
\label{unbalanced:thm:simple}
For any real $\alpha$ with $0 < \alpha \leq 1$, every code with distance at least $n(1-q^{-1}-\epsilon^{2})$ can be punctured to rate $\Omega\left(\frac{\epsilon}{\log q}\right)$ so that it is $(\rho, \ell, \ell(1 + \alpha))$-list recoverable, provided that the following inequalities are satisfied:
 \begin{itemize}
\item $n$ and $q$ are sufficiently large,
\item $0 \leq \rho < 1 - (1+\alpha)^{-1/2}$,
\item $q^{-1/2} < \epsilon < \min(c, 2^{-1} \gamma \sigma)$,
\item $\ell \leq \epsilon^{-2}\sigma^2\gamma$,
\end{itemize}
where $c > 0$ is a constant, $\gamma = (1+\alpha)(1 - \rho)^2 - 1$, and $\sigma=(1-\rho)(2-\rho)^{-1}$.
\end{theorem}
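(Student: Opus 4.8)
The plan is the probabilistic method: show that a uniformly random subset $S\subseteq[n]$ of an appropriate size $m$ yields a puncturing with the stated properties with positive probability. Take $m$ as large as the target rate permits, namely $m=\Theta\!\left(\epsilon^{-1}\log|\mathcal{C}|\right)$; since $\mathcal{C}$ has positive distance, for such $m$ the puncturing $\mathcal{C}\to\mathcal{C}_S$ is injective with high probability, so $\mathcal{C}_S$ has rate $\log_q|\mathcal{C}|/m=\Omega(\epsilon/\log q)$, and $m\le n$ with room to spare once $n,q$ are large. Next record a routine fact holding for all but an $o(1)$ fraction of $S$: every pair of distinct codewords of $\mathcal{C}$ agrees on at most $\epsilon m$ of the coordinates in $S$. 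This is a Chernoff bound for sampling without replacement — the full-length agreement is at most $n(q^{-1}+\epsilon^2)<2n\epsilon^2$ — union-bounded over all $\binom{|\mathcal{C}|}{2}$ pairs, and it survives because $m\epsilon\log(1/\epsilon)\gg\log|\mathcal{C}|$. Condition on this event.

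Suppose $\mathcal{C}_S$ is not $(\rho,\ell,\ell(1+\alpha))$-list recoverable, and put $N:=\lfloor\ell(1+\alpha)\rfloor+1$. Then there are sets $\{A_i\subseteq[q]\}_{i\in S}$ with $|A_i|\le\ell$ and distinct $c_1,\dots,c_N\in\mathcal{C}$ with $\Delta\!\left((c_j)_S,\prod_{i\in S}A_i\right)\le\rho m$ for all $j$ (keep only $N$ of the close codewords). The key reduction eliminates the lists. For a coordinate $i$ let $f_i:=\#\{j<k:\,c_j[i]=c_k[i]\}$, the number of colliding pairs among $c_1,\dots,c_N$ at $i$. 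Each $c_j$ lies in $\prod A_i$ on $\ge(1-\rho)m$ coordinates of $S$, so on average over $i\in S$ at least $(1-\rho)N$ of the symbols $c_1[i],\dots,c_N[i]$ fall in the $\le\ell$-element set $A_i$; by Markov there is $G\subseteq S$ with $|G|\ge(1-\rho/\rho')m$ on which $\ge(1-\rho')N$ of them fall in $A_i$, and pigeonholing those symbols into $\le\ell$ values (convexity of $\binom{\cdot}{2}$) gives
\[
f_i\ \ge\ \frac{(1-\rho')N\bigl((1-\rho')N-\ell\bigr)}{2\ell}\qquad(i\in G).
\]
Using $N>\ell(1+\alpha)$, optimizing the free parameter $\rho'\in(\rho,\alpha/(1+\alpha))$ — this is the source of $\sigma=(1-\rho)(2-\rho)^{-1}$ — and using that $\gamma=(1+\alpha)(1-\rho)^2-1$ is positive, which is exactly the hypothesis $\rho<1-(1+\alpha)^{-1/2}$, one gets explicit $\theta_0=\Omega(\gamma\ell)$ and $\beta=\beta(\rho,\alpha)>0$ so that the bad configuration forces $\bigl|\{i\in S:f_i\ge\theta_0\}\bigr|\ge\beta m$. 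Crucially $f_i$, $\theta_0$, $\beta$ depend only on $c_1,\dots,c_N$, not on the lists.

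It remains to show that, over the random $S$, no $N$-tuple of distinct codewords has $\bigl|\{i\in S:f_i\ge\theta_0\}\bigr|\ge\beta m$. The distance hypothesis bounds the total collision count: $\sum_{i\in[n]}f_i=\sum_{j<k}\bigl(n-\Delta(c_j,c_k)\bigr)\le\binom{N}{2}n(q^{-1}+\epsilon^2)<2\binom{N}{2}n\epsilon^2$, so at most $2\binom{N}{2}n\epsilon^2/\theta_0$ coordinates of $[n]$ have $f_i\ge\theta_0$ — a fraction $O\!\bigl((1+\alpha)^2\ell\epsilon^2/\gamma\bigr)$ of $[n]$, which the hypotheses $\ell\le\epsilon^{-2}\sigma^2\gamma$ and $\epsilon<2^{-1}\gamma\sigma$ are calibrated to keep below $\beta$. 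Thus the event $\bigl|\{i\in S:f_i\ge\theta_0\}\bigr|\ge\beta m$ is a genuine deviation for random $S$; one bounds its probability by a Chernoff estimate for sampling without replacement applied to the bounded indicators $\mathbbm{1}[f_i\ge\theta_0]$, and then takes a union bound over the $N$-tuples of codewords. Together with the conditioned event this completes the proof.

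I expect the last step to be the main obstacle. A blunt union bound over all $N$-subsets of $\mathcal{C}$ is dangerously lossy at the extremal parameters: when $\ell$ is at the top of its range, the forced count $\beta m$ and its typical value differ by only a bounded factor, and the large-deviation exponent $\Theta(m)$ only barely competes with the $\Theta(\ell\log|\mathcal{C}|)=\Theta(\epsilon^{-2}\log|\mathcal{C}|)$ cost of the union bound while $m=\Theta(\epsilon^{-1}\log|\mathcal{C}|)$ — so one must track every constant in the reduction (the optimal $\rho'$, the exact $\theta_0$, hence the precise forms of $\gamma$ and $\sigma$), which is what forces the particular hypotheses $\rho<1-(1+\alpha)^{-1/2}$, $\epsilon<\min(c,2^{-1}\gamma\sigma)$, $\ell\le\epsilon^{-2}\sigma^2\gamma$. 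A naive execution reaches only $\ell=O(\epsilon^{-1})$ — essentially the Johnson regime — so the extra $\epsilon^{-1}$ factor, which is the whole point of the theorem, must come from a more efficient accounting: rather than a union bound, one bounds the expected number of bad configurations (a second-moment/clustering argument), exploiting that at each coordinate of $G$ the $N$ codewords fall into $\le\ell$ clusters by common symbol, so that a bad configuration is far from generic and is pinned down by substantially fewer than $N\log|\mathcal{C}|$ bits.
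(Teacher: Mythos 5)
Your reduction in the first half (from a bad list-recovery configuration to an $N$-tuple of codewords with $\Omega(\gamma\ell)$ colliding pairs on a constant fraction of the coordinates of $S$) matches the paper's first step, including the role of $\sigma$ and $\gamma$. But the proof is not complete: the final step, which you yourself flag as the main obstacle, is exactly where the paper's one essential idea lives, and your proposal does not supply it. As you correctly compute, a union bound over $N$-tuples costs $\exp(\Theta(\ell\log|\mathcal{C}|))=\exp(\Theta(\epsilon^{-2}\log|\mathcal{C}|))$ against a deviation probability of only $\exp(-\Theta(m))=\exp(-\Theta(\epsilon^{-1}\log|\mathcal{C}|))$, so it loses by a factor of $\epsilon^{-1}$ in the exponent. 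Your proposed repair --- ``bound the expected number of bad configurations (a second-moment/clustering argument)'' --- is a gesture, not an argument, and it is not what the paper does.

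The missing idea is to shrink the witness before union bounding. From the bad collection $\mathcal{D}$ of $>\ell(1+\alpha)$ codewords, sample a random sub-collection $\mathcal{C}'$ by keeping each codeword independently with probability $p\approx(\gamma\ell/2)^{-1/2}\ell(1+\alpha)|\mathcal{D}|^{-1}$, so that $\mathbf{E}|\mathcal{C}'|=O(\sqrt{\ell/\gamma})$. At each good coordinate $i$ there are $\geq\gamma\ell/2$ \emph{disjoint} colliding pairs, so by a birthday-paradox computation the probability that $\mathcal{C}'$ retains at least one colliding pair at $i$ exceeds $1/2$; averaging, some fixed $\mathcal{C}'$ of size $O(\sqrt{\ell/\gamma})$ has a colliding pair at $\Omega(\sigma m)$ coordinates of $S$. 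One then union bounds only over subsets of $\mathcal{C}$ of size $O(\sqrt{\ell/\gamma})$, costing $\exp(O(\sqrt{\ell/\gamma})\log|\mathcal{C}|)=\exp(O(\epsilon^{-1}\sigma\log|\mathcal{C}|))$, which the choice $m=\Theta(\epsilon^{-1}\log|\mathcal{C}|)$ comfortably beats; the per-witness deviation bound is a hypergeometric tail estimate using the fact that the set of coordinates where some pair of $\mathcal{C}'$ agrees has total size $\leq(nq^{-1}+d)\binom{|\mathcal{C}'|}{2}=O(d\ell/\gamma)$, a small fraction of $n$ by the hypothesis on $\ell$. Without this witness-compression step (or a genuine substitute), your argument only reaches $\ell=O(\epsilon^{-1})$, i.e., the Johnson regime, and does not prove the theorem.
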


In fact, we show a random puncturing is list recoverable with the same list size with high probability; see Theorem \ref{unbalanced:thm:main} for a precise statement. 

It is helpful to consider the case $\rho=0$ and $\alpha = 1$.
Then we obtain a $(\ell, 2\ell)$ zero-error list-recoverable code.
If we start with a code having distance nearly as large as possible ({\em i.e.} take $\epsilon = \Theta(q^{-1/2})$), then we obtain input and output list sizes of $\Omega(q)$ and rate $\Omega(q^{-1/2}\log^{-1}q)$.
In this way, we obtain a punctured code of rate $\Omega(\ell^{-1/2}\log^{-1} \ell)$.
The main point is that this is better than the codes guaranteed by the Johnson bound (discussed in more detail below), which gives a code of rate $\Omega(\ell^{-1})$.
A completely random code, however, can be $(\ell, 2\ell)$ zero-error list-recoverable with rate $\Omega(1)$. 

One nice feature Theorem \ref{unbalanced:thm:simple} is that it can yield an input list size as large as $\Omega(q)$. A second nice feature is that the theorem yields a tight relationship between the input and output list sizes; many results on list recovery give an output list that is much larger than the input list.
Finally we mention that the proof of Theorem \ref{unbalanced:thm:simple} is relatively simple and elementary; we draw a slightly more explicit comparison to the earlier work of Rudra and Wooters~\cite{RW14,RW15} below.

\section{Background}

In this section, we give a brief discussion about the current state of the literature. Theorem \ref{unbalanced:thm:simple} is analogous to a theorem of Rudra and Wooters~\cite{RW14,RW15} on the {\em list decodability} of punctured codes over large alphabets.
A code $\mathcal{C} \subset [q]^n$ is $(\rho, \ell)$-list decodable if for each $x \in [q]^n$, there are at most $\ell$ codewords of $\mathcal{C}$ that differ from $x$ in fewer than $\rho n$ coordinates.

\begin{theorem}[\cite{RW15}]\label{unbalanced:thm:listDecodability}
Let $\epsilon > q^{-1/2}$ be a real number, and $q,n$ be sufficiently large integers. Every code $\mathcal{C} \subset [q]^n$ with distance $n(1-q^{-1}-\epsilon^2)$ can be punctured to rate $\tilde{\Omega}\left(\frac{\epsilon}{\log q}\right)$ so that it is $(1 - O(\epsilon),O(\epsilon^{-1}))$-list decodable.
\end{theorem}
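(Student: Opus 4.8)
\textbf{Proof plan.}
The plan is to take $S\subseteq[n]$ to be a uniformly random subset of size $m=\Theta(\epsilon^{-1}\log|\mathcal{C}|)$, so that (once we check that $\mathcal{C}_S$ still has $|\mathcal{C}|$ codewords --- a short first-moment consequence of the distance hypothesis) the rate of $\mathcal{C}_S$ is $\Omega(\epsilon/\log q)$, and to show $\mathcal{C}_S$ is $(\rho,\ell,\ell(1+\alpha))$-list recoverable with high probability. The argument splits into a combinatorial reduction and a probabilistic estimate, mirroring the structure of Rudra and Wootters' proof of Theorem~\ref{unbalanced:thm:listDecodability}.

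For the reduction, suppose $\mathcal{C}_S$ is \emph{not} $(\rho,\ell,\ell(1+\alpha))$-list recoverable: there are sets $A_i\subseteq[q]$ with $|A_i|\le\ell$ and codewords $c^{(0)},\dots,c^{(L)}\in\mathcal{C}$ with $L=\ell(1+\alpha)$ such that $|\{i\in S:c^{(j)}[i]\in A_i\}|\ge(1-\rho)m$ for each $j$. Writing $m_i$ for the number of $j$ with $c^{(j)}[i]\in A_i$, double counting gives $\sum_{i\in S}m_i\ge(L+1)(1-\rho)m$; and since the $m_i$ covered symbols at coordinate $i$ all lie in the $\le\ell$-element set $A_i$, convexity forces at least $\tfrac{m_i(m_i-\ell)}{2\ell}$ monochromatic pairs among them. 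Summing over $i\in S$ and applying Cauchy--Schwarz, the total coordinate-wise agreement $P=\sum_{i\in S}\sum_{0\le j<j'\le L}\mathbbm{1}[c^{(j)}[i]=c^{(j')}[i]]$ is at least an explicit threshold $T=\tfrac{(L+1)(1-\rho)m}{2}\bigl(\tfrac{(L+1)(1-\rho)}{\ell}-1\bigr)$, whose leading term is $\tfrac{(1-\rho)^2(L+1)^2 m}{2\ell}$. Thus a failure of list recovery forces the punctured list to exhibit an anomalously large amount of pairwise agreement.

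The distance hypothesis is what makes $T$ anomalous: any two codewords of $\mathcal{C}$ agree on at most $(q^{-1}+\epsilon^{2})n$ coordinates, so $\E[P]\le\binom{L+1}{2}(q^{-1}+\epsilon^{2})m$. A short calculation --- in which the quantities $\gamma=(1+\alpha)(1-\rho)^{2}-1$ and $\sigma=(1-\rho)(2-\rho)^{-1}$ appear --- shows that the conditions $\rho<1-(1+\alpha)^{-1/2}$, $\ell\le\epsilon^{-2}\sigma^{2}\gamma$, and $\epsilon>q^{-1/2}$ (so $q^{-1}+\epsilon^{2}<2\epsilon^{2}$) together force $\E[P]$ strictly below $T$, leaving a margin the concentration step can use. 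Since $P=\sum_{i\in S}P_i$ with $P_i\in\{0,\dots,\binom{L+1}{2}\}$ and $\sum_i P_i$ bounded via the distance, a Chernoff/Bernstein bound for sampling without replacement makes $\Prob[P\ge T]$ exponentially small for a fixed configuration.

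The crux is making the union bound affordable. Ranging naively over all $(L+1)$-subsets of $\mathcal{C}$ contributes $e^{\Theta((L+1)\log|\mathcal{C}|)}$ to the union bound, while the concentration exponent is only of order $\epsilon^{2}m=\Theta(\epsilon\log|\mathcal{C}|)$; since $\epsilon<1$ this does not close, and indeed even the best per-configuration bound is $e^{-\Theta(m)}$, which is hopeless when $L+1$ has order $\epsilon^{-2}$. The argument must therefore certify a failure by an object describable with only $O(\log|\mathcal{C}|)$ choices --- for instance by extracting the violating list incrementally and recording, for each newly added codeword, only its pattern of agreements with those already chosen, so that the description cost is amortized against the agreement it contributes --- and then run the concentration estimate against this smaller family of certificates. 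Carrying out this bookkeeping while keeping track of the precise roles of $\gamma$ and $\sigma$, and choosing $m$ as large as the target rate $\Omega(\epsilon/\log q)$ permits, is the delicate step; the remaining ingredients are elementary.
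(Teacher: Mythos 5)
There is a genuine gap, and in fact two distinct problems. First, you are proving the wrong statement. Theorem~\ref{unbalanced:thm:listDecodability} asserts \emph{list decodability} at radius $1-O(\epsilon)$, i.e.\ a decoding radius tending to $1$ as $\epsilon\to 0$. Your plan establishes $(\rho,\ell,\ell(1+\alpha))$-\emph{list recoverability} under the standing constraint $\rho<1-(1+\alpha)^{-1/2}$, which (with $\alpha\le 1$) caps $\rho$ below $1-2^{-1/2}<0.3$, an absolute constant. Specializing your framework to $\ell=1$ therefore yields list decodability only up to a constant radius, nowhere near $1-O(\epsilon)$; the constraint is not an artifact of bookkeeping but is exactly where your first-moment comparison of $\E[P]$ against the threshold $T$ lives, so the regime $1-\rho=O(\epsilon)$, $L=O(\epsilon^{-1})$ leaves only a constant-factor margin between $\E[P]$ and $T$ and demands a much more careful concentration argument. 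What you have sketched is essentially the paper's own Theorem~\ref{unbalanced:thm:simple}/\ref{unbalanced:thm:main} (which the paper proves for list recovery), not the quoted result of Rudra and Wootters, which the paper does not prove but imports from \cite{RW15}.

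Second, even within your chosen framework, the step you yourself call ``the crux'' --- making the union bound affordable --- is not carried out. You correctly diagnose that enumerating all $(L+1)$-subsets of $\mathcal{C}$ costs $e^{\Theta((L+1)\log|\mathcal{C}|)}$ against a concentration exponent of only $\Theta(\epsilon\log|\mathcal{C}|)$, but the proposed remedy (``extracting the violating list incrementally and recording \dots its pattern of agreements'') is a one-sentence idea with no accounting of how many certificates arise or why the amortization closes. For comparison, the paper's proof of Theorem~\ref{unbalanced:thm:main} resolves this by randomly sparsifying the violating list $\mathcal{D}$ down to a certificate $\mathcal{C}'$ of size $O(\sqrt{\ell/\gamma})$ whose set $T(\mathcal{C}')$ of collision coordinates is intrinsic to $\mathcal{C}'$ (independent of $S$, of the lists $A_i$, and of any center), so the union bound ranges only over $\binom{|\mathcal{C}|}{O(\sqrt{\ell/\gamma})}$ certificates; Rudra and Wootters' actual proof of the list-decodability statement instead uses chaining/Gaussian-width machinery precisely because, at radius $1-O(\epsilon)$, no elementary certificate compression of the kind you gesture at is known to suffice. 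As written, the proposal neither proves the stated theorem nor supplies the key step of the theorem it does address.
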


Theorems \ref{unbalanced:thm:simple} and \ref{unbalanced:thm:listDecodability} are most interesting in the case of Reed-Solomon codes.
The codewords of the degree-$d$ Reed-Solomon code over $\mathbb{F}_q$ with evaluation set $S \in \binom{[q]}{n}$ are the evaluations of all univariate polynomials of degree at most $d$ on elements of $S$.
In other words, suppose $S = \{s_1,\ldots,s_n\}$.
The degree-$d$ Reed-Solomon code on $S$ is the set
\[
\{(p(s_1),\ldots,p(s_n))~|~\operatorname{deg}(p) \leq d\}.
\]
The block length of this code is $n \leq q$.
Since two distinct polynomials of degree at most $d$ can agree on at most $d$ locations, the distance of any degree-$d$ Reed-Solomon code is at least $n-d$. 

A fundamental result, which gives a lower bound on the list decodability of a code with given distance, is the {\em Johnson bound} (see, for example Corollary $3.2$ in ~\cite{GUR06}).
\begin{theorem}[Johnson bound for list decoding]
\label{unbalanced:thm:jbdecode}
Every code $\mathcal{C} \subset [q]^n$ of minimum distance at least $n(1 - q^{-1} - \epsilon^2)$ is $((1 - q^{-1} - \epsilon), O(\epsilon^{-1}))$- list decodable. 
\end{theorem}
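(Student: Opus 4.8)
The plan is to prove this by the classical ``geometry of close codewords'' argument that underlies the Johnson bound. Fix a received word $x\in[q]^n$, and let $c_1,\dots,c_L\in\mathcal C$ be all the codewords with $\Delta(c_i,x)\le(1-q^{-1}-\epsilon)n$; the task is to bound $L$. The idea is to embed $[q]^n$ into Euclidean space so that Hamming distance becomes an affine function of the inner product, and then to invoke the standard fact that only few unit vectors can be pairwise almost orthogonal while all being noticeably correlated with a common direction.

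Concretely, map each symbol $a\in[q]$ to the vector $\chi_a:=e_a-q^{-1}\mathbf 1\in\R^q$, where $e_a$ is the $a$-th standard basis vector and $\mathbf 1$ the all-ones vector, and extend this coordinatewise to a map $\psi\colon[q]^n\to\R^{qn}$. A one-line computation shows that for all $y,z\in[q]^n$,
\[
\langle\psi(y),\psi(z)\rangle=\bigl(n-\Delta(y,z)\bigr)-nq^{-1}\qquad\text{and}\qquad\|\psi(y)\|^2=n(1-q^{-1}).
\]
Writing $v_i:=\psi(c_i)$ and $w:=\psi(x)$, the minimum-distance hypothesis translates to $\langle v_i,v_j\rangle\le\epsilon^2 n$ for $i\ne j$, the decoding-radius hypothesis to $\langle v_i,w\rangle\ge\epsilon n$ for every $i$, and all of $v_1,\dots,v_L,w$ have squared length $n(1-q^{-1})$. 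Subtracting the ``uniform'' vector $q^{-1}\mathbf 1$ in each coordinate block is the essential step: it cancels the $nq^{-1}$ random-agreement baseline that would otherwise dominate these $\epsilon$-scale inner products and make the estimate vacuous when $\epsilon$ is small.

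Now pass to unit vectors: let $\hat v_i:=v_i/\|v_i\|$ and $\hat w:=w/\|w\|$, so that $\langle\hat v_i,\hat v_j\rangle\le\mu:=\epsilon^2/(1-q^{-1})$ for $i\ne j$ and $\langle\hat v_i,\hat w\rangle\ge\lambda:=\epsilon/(1-q^{-1})$, where $\lambda^2>\mu$ since $1-q^{-1}<1$. Expanding $0\le\bigl\|\sum_{i=1}^L(\hat v_i-\lambda\hat w)\bigr\|^2$ and using these two bounds to estimate each of the $L$ diagonal terms by $1-\lambda^2$ and each of the $L(L-1)$ off-diagonal terms by $\mu-\lambda^2<0$ gives $0\le L(1-\lambda^2)+L(L-1)(\mu-\lambda^2)$, hence
\[
L\le\frac{1-\mu}{\lambda^2-\mu};
\]
substituting the values of $\mu$ and $\lambda$ and simplifying then yields the desired bound. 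I do not expect a genuine obstacle here: the argument is short and entirely elementary, and the only points needing care are choosing the embedding together with its centering so the estimate is not vacuous, and then carrying out the final numerical simplification. (An essentially equivalent, embedding-free derivation instead counts, for each coordinate $k\in[n]$ and symbol $a\in[q]$, the number $m_{k,a}$ of codewords $c_i$ with $(c_i)_k=a$; one bounds $\sum_{k,a}\binom{m_{k,a}}{2}$ from above using the pairwise-distance hypothesis and from below by convexity, and combines this with $\sum_k m_{k,x_k}$, which the decoding-radius hypothesis bounds below. There the $q^{-1}$ baseline enters through the convexity step, exactly as the centering handles it above.)
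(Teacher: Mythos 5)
The paper offers no proof of this statement; it is quoted as Corollary~3.2 of~[GUR06]. Judged on its own terms, your argument has a genuine gap at the very last step. Everything through the inequality $L\le(1-\mu)/(\lambda^2-\mu)$ is correct: the centered embedding, the translation of the hypotheses into $\langle\hat v_i,\hat v_j\rangle\le\mu=\epsilon^2/(1-q^{-1})$ and $\langle\hat v_i,\hat w\rangle\ge\lambda=\epsilon/(1-q^{-1})$, and the expansion of $\bigl\|\sum_i(\hat v_i-\lambda\hat w)\bigr\|^2$. The problem is the final sentence, ``substituting the values of $\mu$ and $\lambda$ and simplifying then yields the desired bound.'' It does not: $\lambda^2$ and $\mu$ nearly cancel,
\[
\lambda^2-\mu=\frac{\epsilon^2}{(1-q^{-1})^2}-\frac{\epsilon^2}{1-q^{-1}}=\frac{\epsilon^2\,q^{-1}}{(1-q^{-1})^2},
\]
so the denominator is of order $\epsilon^2/q$ rather than of order $\epsilon$, and what you actually obtain is
\[
L\le\frac{1-\mu}{\lambda^2-\mu}\le\frac{q(1-q^{-1})^2}{\epsilon^2}=O\!\left(q\,\epsilon^{-2}\right).
\]
In the regime relevant to this paper ($\epsilon>q^{-1/2}$, i.e.\ $q>\epsilon^{-2}$) this exceeds $\epsilon^{-4}$, which is very far from the claimed $O(\epsilon^{-1})$.

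The parenthetical counting variant does not rescue this: carrying out the convexity step there, the analogous denominator is $(q^{-1}+\epsilon)^2+\tfrac{(1-q^{-1}-\epsilon)^2}{q-1}-(q^{-1}+\epsilon^2)=\tfrac{\epsilon^2}{q-1}$, giving the same $O(q\epsilon^{-2})$. This is not a cosmetic issue you can fix by optimizing the shift $\lambda$; taking $\beta\hat w$ with general $\beta>0$, the off-diagonal term $\mu-2\beta\lambda+\beta^2$ is minimized exactly at $\beta=\lambda$, so $O(q\epsilon^{-2})$ (together with the trivial $L\le n(q-1)$) is the best this machinery yields for distance $n(1-q^{-1}-\epsilon^2)$ and radius $n(1-q^{-1}-\epsilon)$. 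To close the gap you would need either a genuinely different argument or a corrected target; as written, the claimed output list size $O(\epsilon^{-1})$ does not follow from the Johnson-bound argument you propose.
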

One of the main points of Theorem \ref{unbalanced:thm:listDecodability} is that it shows that there are Reed-Solomon codes that are list decodable beyond the Johnson bound. This is a very interesting result because variations of Reed-Solomon codes have been shown to beat the Johnson bound. On the other hand, Reed-Solomon codes are quite natural and some applications in Complexity theory specifically needed Reed-Solomon codes (and not the aforementioned variants). 

A similar result as Theorem~\ref{unbalanced:thm:jbdecode}, using a similar argument, also known as the Johnson bound, is known for list recoverability (see for example, Lemma $5.2$ in~\cite{GKORS18}). 

\begin{theorem}[Johnson bound for list recovery]
\label{unbalanced:thm:jbrecover}
Let $\mathcal{C} \subseteq [q]^n$ be a code of relative distance $1 - \epsilon$. Then $\mathcal{C}$ is $(\rho, \ell, L)$-list recoverable for any $\ell \leq \epsilon^{-1}(1 - \rho)^2$ with $L = \frac{\ell}{(1-\rho)^2 - \epsilon\ell} $.
\end{theorem}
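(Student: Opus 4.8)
The plan is to run the standard second-moment / inclusion–exclusion argument used for the classical Johnson bound, adapted to the list-recovery setting. Fix input lists $A_1,\dots,A_n$ with $|A_i|\le \ell$, and suppose $c^{(1)},\dots,c^{(L)}\in\mathcal{C}$ are codewords with $\Delta(c^{(j)},A_1\times\cdots\times A_n)\le \rho n$ for every $j$; the goal is to bound $L$. First I would encode the hypotheses as counts of \emph{agreements}: for each codeword $c^{(j)}$, let $S_j\subseteq[n]$ be a set of coordinates on which $c^{(j)}$ agrees with some element of the corresponding list, so $|S_j|\ge (1-\rho)n$. The key combinatorial quantity is, for each coordinate $i$ and each symbol $a\in A_i$, the number $N_{i,a}$ of codewords in our list that take value $a$ at coordinate $i$. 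Since $\sum_{a\in A_i} N_{i,a}\ge$ (number of $j$ with $i\in S_j$) and there are at most $\ell$ symbols per list, a convexity/Cauchy–Schwarz step lower-bounds $\sum_{i,a}\binom{N_{i,a}}{2}$ — i.e. the total number of (coordinate, ordered-or-unordered pair of codewords) agreement incidences — by something like $\frac{1}{\ell}\sum_i \binom{(\text{coverage of }i)}{2}$ after appropriate normalization, and then summing the coverage $\sum_i(\text{coverage of }i)\ge L(1-\rho)n$ over $i$ and applying convexity again in $i$.

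The second, opposing bound comes from the distance hypothesis: any two distinct codewords agree on at most $\epsilon n$ coordinates (since the relative distance is $1-\epsilon$), so $\sum_{i,a}\binom{N_{i,a}}{2}\le \binom{L}{2}\epsilon n$. Combining the two inequalities gives a quadratic inequality in $L$: schematically, the lower bound grows like $L^2(1-\rho)^2 n - L\cdot(\text{diagonal terms})$ while the upper bound is $\tfrac12 L(L-1)\epsilon n$. Rearranging and solving for $L$ should yield exactly $L\le \dfrac{\ell}{(1-\rho)^2-\epsilon\ell}$, with the hypothesis $\ell\le \epsilon^{-1}(1-\rho)^2$ being precisely what is needed to keep the denominator positive (and hence the bound finite and meaningful). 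I would be careful about off-by-one issues ($\binom{N}{2}$ versus $N^2/2$, and whether one counts ordered or unordered pairs) and about the diagonal terms $\sum_j |S_j|$ that appear when expanding $(\sum_j \mathbbm{1}[\cdots])^2$; these lower-order terms either help us or can be absorbed, but getting the clean stated constant requires tracking them honestly.

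The main obstacle is the convexity step that handles the input list size $\ell$: one must argue that spreading the coverage of a coordinate across up to $\ell$ distinct symbols is the worst case, so that $\sum_{a\in A_i}\binom{N_{i,a}}{2}$ is minimized (given the total coverage of $i$) when the $N_{i,a}$ are as equal as possible across the $\le\ell$ symbols. This is where the factor $\ell$ enters the denominator, and it is the only place the list-recovery setting genuinely differs from ordinary list decoding (where $\ell=1$ and there is nothing to optimize). Everything else — the agreement/distance double count and the final quadratic — is routine. An alternative, essentially equivalent, route is to pass to an auxiliary code or use the probabilistic ("random vector on the sphere") form of the Johnson bound, but the elementary double-counting argument above is the most transparent and is what I would write up.
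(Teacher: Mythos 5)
Your proposal is correct and is exactly the standard double-counting argument behind this bound; the paper itself gives no proof, deferring to Lemma 5.2 of \cite{GKORS18}, which proceeds along the same lines. For the record, the bookkeeping you flag does close cleanly: writing $M_i=\sum_{a\in A_i}N_{i,a}$, one gets $\binom{L}{2}\epsilon n\ \ge\ \sum_{i,a}\binom{N_{i,a}}{2}\ \ge\ \tfrac12\bigl(L^2(1-\rho)^2 n/\ell - Ln\bigr)$ (bounding the quadratic and diagonal terms separately, so no monotonicity caveat is needed), which rearranges to $L\le (1-\epsilon)\ell/((1-\rho)^2-\epsilon\ell)\le \ell/((1-\rho)^2-\epsilon\ell)$, with strict positivity of the denominator being what the hypothesis on $\ell$ is really for.
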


For perspective, the goal here is to have the input list size $\ell$ as large as possible for a given $\rho$ while ensuring that the output list size $L$ is small, for example, $L = \operatorname{poly}(\ell)$ is a reasonable regime of efficiency. Theorem~\ref{unbalanced:thm:jbrecover} roughly says that every $q$-ary code of distance $1 - q^{-1}  - \epsilon$ is $(\rho, \ell, O(\ell))$-list recoverable for $\ell = O(\epsilon^{-1})$ and $\rho \leq 1 - \sqrt{2\epsilon\ell}$. Thus, a question naturally arises: \\

\textbf{Question:} \textit{Are there $q$-ary Reed-Solomon codes of distance $1 - q^{-1} - \epsilon$ which are $(\rho, \ell, L)$-list recoverable for some $\rho \in [0,1)$ and $\ell = \omega(\epsilon^{-1})$ and $L = \operatorname{poly(\ell)}$?}\\

We would like to remark again, that the case $\rho = 0$ is also interesting. A result of Guruswami and Rudra~\cite{GR06} shows that there are Reed-Solomon codes where one cannot hope to beat the Johnson bound in the case when $\rho = 0$.

\begin{theorem}[Theorem $1$ in \cite{GR06}]
\label{unbalanced:thm:GR}
Let $q = p^{m}$ where $p$ is a prime, and let $\mathcal{C}$ denote the degree-$\left(\frac{p^m-1}{p-1}\right)$ Reed-Solomon code over $\mathbb{F}_q$ with $\mathbb{F}_q$ as the evaluation set. Then there are lists $S_1,\ldots, S_q$ each of size $p$ such that 
\[
|\mathcal{C} \cap (S_1 \times \cdots \times S_q)| = q^{2^m}.
\]
\end{theorem}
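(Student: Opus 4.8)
The plan is to prove this constructively: to write down the lists $S_1,\dots,S_q$ explicitly and then to count exactly how many codewords of $\mathcal{C}$ land in the box they define. I would index the $q$ coordinates of the code by the elements of $\mathbb{F}_q=\mathbb{F}_{p^m}$ (which is how the evaluation set is given) and take $S_i=\mathbb{F}_p$, the prime subfield, for every $i$. Since the evaluation set is all of $\mathbb{F}_q$, a codeword lies in $\mathbb{F}_p\times\cdots\times\mathbb{F}_p$ exactly when it is the evaluation vector of some polynomial of degree at most $d:=\frac{p^m-1}{p-1}=1+p+\cdots+p^{m-1}$ all of whose values on $\mathbb{F}_q$ lie in $\mathbb{F}_p$; and because $d<q$, distinct such polynomials give distinct codewords. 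So $|\mathcal{C}\cap(S_1\times\cdots\times S_q)|$ equals the number of such polynomials, and everything reduces to counting them.

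It is worth seeing a concrete family of such polynomials first. For each $c\in\mathbb{F}_q$ put $f_c(x):=(x-c)^d$. In characteristic $p$ this factors as $f_c(x)=\prod_{i=0}^{m-1}(x-c)^{p^i}=\prod_{i=0}^{m-1}\bigl(x^{p^i}-c^{p^i}\bigr)$, a polynomial of degree exactly $d$; and for every $\gamma\in\mathbb{F}_q$ one has $f_c(\gamma)=(\gamma-c)^d=N_{\mathbb{F}_q/\mathbb{F}_p}(\gamma-c)\in\mathbb{F}_p$, since $d=1+p+\cdots+p^{m-1}$ is precisely the exponent defining the norm and the norm is Frobenius-invariant. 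Consequently every $\mathbb{F}_p$-linear combination $\sum_c\lambda_c f_c$ with $\lambda_c\in\mathbb{F}_p$ is again an $\mathbb{F}_p$-valued polynomial of degree at most $d$, so already these combinations furnish many codewords inside the box; in fact one can check they span the entire space of such polynomials.

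To count exactly, I would work in $\mathbb{F}_q[x]/(x^q-x)$ with the monomial basis $1,x,\dots,x^{q-1}$. A reduced polynomial $g$ represents an $\mathbb{F}_p$-valued function iff $g^p\equiv g$; comparing coefficients shows that the support of $g$ must be a union of $p$-cyclotomic cosets — orbits of $j\mapsto pj\bmod(q-1)$ — and that on a coset $O$ the coefficients are free to range over $\mathbb{F}_{p^{|O|}}$, i.e.\ over $|O|$ dimensions of $\mathbb{F}_p$. Imposing $\deg g\le d$ forces each coset actually used to lie inside $\{0,1,\dots,d\}$. Now multiplication by $p$ modulo $q-1$ cyclically rotates the length-$m$ base-$p$ digit string of an exponent, and $d$ is precisely the all-ones string; a coset therefore lies in $[0,d]$ iff every digit of its elements is $0$ or $1$, because a digit $\ge 2$ rotated into the leading place already exceeds $d$, while every $0/1$ string is at most $d$. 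Summing $|O|$ over the admissible cosets counts the length-$m$ base-$p$ strings with all digits in $\{0,1\}$, namely $2^m$; hence the space of $\mathbb{F}_p$-valued polynomials of degree $\le d$ has $\mathbb{F}_p$-dimension $2^m$, and therefore $|\mathcal{C}\cap(S_1\times\cdots\times S_q)|=p^{2^m}$.

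The heart of the matter — and the only real obstacle — is this counting step: one must correctly characterize the $\mathbb{F}_p$-valued polynomials of degree at most $d$ and determine the dimension of that space exactly, and the choice of degree $d=\frac{p^m-1}{p-1}$ (the all-ones base-$p$ string) is exactly what makes the cyclotomic-coset bookkeeping come out cleanly. Verifying box membership is immediate from the norm computation above, and the intended conclusion — that $\mathcal{C}$ is not $(0,p,L)$-list recoverable for any $L<p^{2^m}$, a bound super-polynomial in the input list size $p$ once $q=p^m$ grows — then follows at once.
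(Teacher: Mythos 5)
Your construction is exactly the one behind the cited result: take every list to be the prime subfield $\mathbb{F}_p$, identify the codewords lying in the box with the subfield subcode (the BCH code the paper alludes to), and count its dimension via $p$-cyclotomic cosets, using that $d=\frac{p^m-1}{p-1}$ is the all-ones string in base $p$ so that the admissible exponents are precisely the $2^m$ strings with digits in $\{0,1\}$. That computation is carried out correctly (including the ``only if'' direction, since a digit $\geq 2$ rotated to the leading position gives $2p^{m-1}>d$), and the aside about the norm polynomials $(x-c)^d$, while not needed for the count, is also fine.

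The one point to flag is that your final tally is $p^{2^m}$, whereas the statement as transcribed in the paper claims $q^{2^m}=p^{m2^m}$. Your number is the right one: the box $S_1\times\cdots\times S_q$ contains only $p^q$ vectors in total, and already for $p=2$ one has $q^{2^m}=2^{mq}>2^q=p^q$, so the displayed equality cannot hold as written; the $\mathbb{F}_p$-dimension of the space of $\mathbb{F}_p$-valued polynomials of degree at most $d$ is exactly $2^m$, giving $p^{2^m}$ codewords (e.g.\ $p=3$, $m=2$ gives $81$, not $6561$). So treat the $q$ in the exponent base as a typo for $p$; this does not affect the qualitative conclusion the paper draws (superpolynomially many codewords in lists of size $p$), and your proof establishes the corrected statement.
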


In the proof of the above theorem, each list $A_i$ is given by $\mathbb{F}_p \subset \mathbb{F}_q$, and the set of codewords contained in these lists are codewords of the \emph{BCH code}. This result exploits the specific subfield structure of the input lists. To understand this result quantitatively, recall that a degree-$d$ Reed-Solomon code has relative distance $1 - \frac{1}{q} - \frac{d}{q}$. 
Setting $\ell = p-1$ and $\rho = 0$ in the Johnson bound tells us that such a code is $(p-1,O(q))$ zero-error list recoverable. 
Setting the list size as $p$ in the bound gives us nothing, and Theorem~\ref{unbalanced:thm:GR} says that the number of codewords grows superpolynomially in $q$. 

One thing to note in Theorem~\ref{unbalanced:thm:GR} is that the same lists of size $p$ also support at least $q^{2^m}$ codewords for any punctured code. Thus in some sense, the reason why  Theorem~\ref{unbalanced:thm:simple} is true is that lists of size $p-1$ do not support many more codewords in an appropriately punctured code than in the un-punctured case. This is very similar to the intuition in the results of~\cite{RW14},~\cite{RW15} for list-decodability.

Theorem~\ref{unbalanced:thm:simple} immediately gives the following corollary.

\begin{corollary}
For a prime power $q$ and small enough $\epsilon \geq q^{-1/2}$, there are Reed-Solomon codes of rate $\tilde{\Omega}\left(\frac{\epsilon}{\log q}\right)$ which are $(\epsilon^{-2}, O(\epsilon^{-2}))$ zero-error list recoverable.
\end{corollary}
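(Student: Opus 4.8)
The plan is to apply Theorem~\ref{unbalanced:thm:simple} directly, exploiting the elementary fact that a puncturing of a Reed--Solomon code is again a Reed--Solomon code: puncturing to a coordinate set $T$ simply restricts the evaluation set to the corresponding evaluation points, while the degree is unchanged. So the strategy is to pick a base Reed--Solomon code to which the theorem applies, and then read off that the guaranteed puncturing is still Reed--Solomon, with the desired rate and list-recoverability parameters.

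First I would take the degree-$d$ Reed--Solomon code over $\mathbb{F}_q$ with evaluation set all of $\mathbb{F}_q$, so the block length is $n = q$ and there are $q^{d+1}$ codewords; its relative distance is at least $1 - d/n$. Choosing $d$ to be the largest integer with $1 - d/q \geq 1 - q^{-1} - (\epsilon/2)^{2}$ — that is, $d = \Theta(q\epsilon^{2})$, and $d \geq 1$ since $\epsilon \geq q^{-1/2}$ — this code has distance at least $n\bigl(1 - q^{-1} - (\epsilon/2)^{2}\bigr)$. Now apply Theorem~\ref{unbalanced:thm:simple} with $\epsilon$ replaced by $\epsilon/2$, and with $\rho = 0$ and $\alpha = 1$. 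For these parameters $\gamma = (1+\alpha)(1-\rho)^{2} - 1 = 1$ and $\sigma = (1-\rho)(2-\rho)^{-1} = 1/2$; the requirement $\rho < 1 - (1+\alpha)^{-1/2}$ becomes $0 < 1 - 2^{-1/2}$, and the requirement $\epsilon/2 < \min(c,\, 2^{-1}\gamma\sigma) = \min(c,\, 1/4)$ is exactly the meaning of ``small enough $\epsilon$''. Taking the input list size to be the largest allowed, $\ell = (\epsilon/2)^{-2}\sigma^{2}\gamma = \epsilon^{-2}$, the theorem yields a puncturing of rate $\Omega\!\left(\tfrac{\epsilon}{\log q}\right)$ that is $(0, \epsilon^{-2}, 2\epsilon^{-2})$-list recoverable, i.e.\ $(\epsilon^{-2}, O(\epsilon^{-2}))$ zero-error list recoverable. (By the more precise Theorem~\ref{unbalanced:thm:main}, a random puncturing has this property; any single realization suffices.)

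Finally, because the puncturing is obtained by deleting coordinates, it is the degree-$d$ Reed--Solomon code over $\mathbb{F}_q$ whose evaluation set is the subset of $\mathbb{F}_q$ surviving the puncturing; this is the code the corollary asserts. There is essentially no obstacle here: the single conceptual point is that puncturing keeps us inside the Reed--Solomon family, and everything else is bookkeeping of constants (the rescaling $\epsilon \mapsto \epsilon/2$ is only there to make the input list size exactly $\epsilon^{-2}$ rather than $\tfrac14\epsilon^{-2}$). The one thing worth double-checking is consistency of the parameters — in particular that the block length $m = \Theta\!\bigl((d+1)\log q\,/\,\epsilon\bigr) = \Theta(q\epsilon\log q)$ of the punctured code does not exceed $n = q$ (and comfortably exceeds $d+1$, so no two distinct polynomials collide), which is again where $\epsilon$ being ``small enough'' (of order $1/\log q$) is used.
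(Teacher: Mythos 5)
Your proposal is correct and is exactly the derivation the paper intends when it calls this an ``immediate'' corollary of Theorem~\ref{unbalanced:thm:simple}: instantiate the theorem with $\rho=0$, $\alpha=1$ on a degree-$\Theta(q\epsilon^2)$ Reed--Solomon code with full evaluation set, and observe that puncturing stays within the Reed--Solomon family. Your bookkeeping (the $\epsilon\mapsto\epsilon/2$ rescaling, the check that $m=\Theta(q\epsilon\log q)<q$ forces $\epsilon=O(1/\log q)$) is more explicit than the paper's, and the only pedantic caveat is that the rescaled hypothesis technically needs $\epsilon>2q^{-1/2}$ rather than $\epsilon\geq q^{-1/2}$, a constant-factor slack already absorbed by the $\tilde{\Omega}$ and $O(\cdot)$ in the statement.
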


Thus, one is able to go beyond the $O(\epsilon^{-1})$ size input lists. Again, one can easily check that setting $\ell = \epsilon^{-2}$ in the Johnson bound gives nothing. A point worth noting is that there is no inherent upper bound on the input list size, and the input lists can be as large as $\Omega(q)$ when $\epsilon$ is around $q^{-1/2}$. In fact, $\ell = \Omega(q)$ and $L \leq 2\ell$ is the regime for the main application of Theorem~\ref{unbalanced:thm:simple} (see Section~\ref{unbalanced:sec:motivation}). 

A natural attempt at Theorem~\ref{unbalanced:thm:simple} is to use the method from the aforementioned result of Rudra and Wootters. This method uses tools from high dimensional probability. At a \emph{very} high level, the main intuition in their argument is the following: Suppose there is a set $\Lambda$ of $L = \Omega(\epsilon^{-1})$ codewords which are at a distance at most $1 - O(\epsilon)$ from some point $x \in [q]^n$, then there is a subset $\Lambda' \subset \Lambda$ which is much smaller where the distribution of distances from $x$ is similar to that of $\Lambda$. The existence of such a $\Lambda$ is a bad event, which is witnessed by a smaller bad event (i.e., existence of $\Lambda'$) of the \emph{same type}. Thus, this requires one to union bound just over $\Lambda'$'s which are much smaller in number. Attempting to use this idea in a straightforward way to list recovery seems to be very lossy. The proof of Theorem~\ref{unbalanced:thm:simple} builds on this idea, and uses the fact that bad events can be witnessed by smaller bad events of a \emph{different} (although related) type (see Section~\ref{section:sketch} for a somewhat more detailed sketch of the proof). Surprisingly though, the execution of this idea in this case is far simpler than in the previous works, and is completely elementary. 

\subsection{A quantitative summary of rate bounds for list recovery} We summarize the above discussion into a perspective with which one may view Theorem~\ref{unbalanced:thm:simple}. Fix a $\rho \in [0,1)$ to be the fraction of errors from which we wish to list-recover $q$-ary codes where $q$ is larger than the block length. Suppose one wanted to list recover from input lists of size $\ell$ where the output list is $\operatorname{poly}(\ell)$\footnote{The proof of Theorem~\ref{unbalanced:thm:simple} relies on a ``birthday paradox'' type argument that cannot exploit the additional structure when one allows $L$ to grow as a larger function of $\ell$.}, the Johnson bound (Theorem~\ref{unbalanced:thm:jbrecover}) guarantees that Reed-Solomon codes of rate $\Omega(1/\ell)$ achieve this. Theorem~\ref{unbalanced:thm:GR} says that in general, this dependence cannot be improved, i.e., there are Reed-Solomon codes of rate $O(1/\ell)$ where the output list is superpolynomial for infinitely many $q$ and $\ell$. However, Theorem~\ref{unbalanced:thm:simple} says that most Reed-Solomon codes of rate $\Omega(1/(\ell^{1/2}\log q))$ achieve this.  

It should be worth noting that decoding from lists of size $\ell$ can be achieved by random codes of rate $\Omega(1)$, whereas random Reed-Solomon codes require that the rate is $O(1/\log \ell)$ (Theorem~\ref{unbalanced:thm:upperbound}). In fact, nothing better is known even for random linear codes. In this sense random codes are much better at list recovery than random Reed-Solomon codes.

\subsection{Unbalanced expander graphs from codes}
\label{unbalanced:sec:motivation}

The zero-error case of Theorem \ref{unbalanced:thm:simple} leads to some progress on a question of Guruswami regarding unbalanced expanders obtained from Reed-Solomon graphs. This was also the main motivation behind this theorem. 

Informally, an expander graph is a graph where every small set of vertices has a relatively large neighborhood. 
In this case, we say that all small sets \emph{expand}. 
One interesting type of expander graphs are \emph{unbalanced expanders}. 
These are bipartite graphs where one side is much larger than the other side, and we want that all the small subsets of the \emph{larger} side expand.

\begin{definition}[Unbalanced expander]
A $(k,d,\epsilon)$-regular unbalanced expander is a bipartite graph on vertex set $L \sqcup R$, $|L| \geq |R|$ where the degree of every vertex in $L$ is $d$, and for every $S \subseteq L$ such that $|S| = k$, we have that $|N(S)|\geq d|S|(1 - \epsilon)$.
\end{definition}

Note that in the above definition, $|N(S)| \leq d|S|$. 
We are typically interested in infinite families of unbalanced expanders for which $\epsilon = o(1)$, $d = o(|R|)$, and $k = \tilde{\Omega}(|R|/d)$. 

Given a code $\mathcal{C} \subseteq [q]^n$, it is natural to look at the bipartite graph, which we will denote by $G(\mathcal{C})$ where the vertex sets are $\mathcal{C} \sqcup ([n] \times [q])$. For every $c = (c_1,\ldots, c_n) \in \mathcal{C}$ the set of neighbors is $\{(1,c_1),\ldots,(n,c_n)\}$. 
This graph is especially interesting when $\mathcal{C}$ is a low-degree Reed-Solomon code evaluated at an appropriate set. 

The following is a open question in the study of pseudorandomness that is attributed to Guruswami~\cite{GUR}, (also explicitly stated in~\cite{CZ18}): Fix an integer $d$. For a subset $S \in \binom{[q]}{m}$, define $\mathcal{C}_S$ to be the degree-$d$ Reed-Solomon code with $S$ as the evaluation set, where $d$ is a constant. \\

\textbf{Question:}  \textit{What is the smallest $m$ such that when $S$ is chosen uniformly at random, $G(\mathcal{C}_S)$ is, with high probability, a $(\Omega(q),1/2)$-unbalanced expander?}  \\

There are examples of explicit constructions unbalanced expanders that come from other means~\cite{GUV09}. In fact,~\cite{GUV09} also contains a construction based on a variant of Reed-Solomon code known as \emph{folded Reed-Solomon code}. However, the above question has a very natural geometric/combinatorial ``core'' which is interesting in its own right and so far, seems to evade known techniques. 

It was probably well known that $m = \Omega(\log q)$, and we also give a proof of this (Theorem~\ref{unbalanced:thm:upperbound}) since we could not find it in the literature. But for upper bounds, it seems nothing better than the almost trivial $m = O(q)$ was known~\cite{CHE}. Since the zero-error list recoverability of $\mathcal{C}$ is equivalent to the expansion of $G(\mathcal{C})$, an immediate Corollary to Theorem~\ref{unbalanced:thm:main} gives an improved upper bound.

\begin{corollary}
Let  $q,n$ be sufficiently large integers and $\alpha \in (0,1)$, $\epsilon > q^{-1/2}$ be real numbers. For every code $\mathcal{C} \subset [q]^n$ with relative distance $1-q^{-1}-\epsilon^2$, there is a subset $S \subseteq [n]$ such that $|S| = O(\epsilon n \log q)$ such that $G(\mathcal{C}_S)$ is a $(\alpha \epsilon^{-2}, |S|, \alpha)$-unbalanced expander.
\end{corollary}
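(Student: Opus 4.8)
\noindent\emph{Proof proposal.} The plan is to deduce this from the case of small $\rho$ in Theorem~\ref{unbalanced:thm:main} (the random-puncturing version of Theorem~\ref{unbalanced:thm:simple}), via the standard translation between list recovery and vertex expansion of $G(\mathcal{C}_S)$. For $T\subseteq\mathcal{C}_S$ and $i\in S$, write $A_i(T)=\{c_i:c\in T\}$ for the projection of $T$ onto coordinate $i$. By the definition of $G(\mathcal{C}_S)$ we have $|N(T)|=\sum_{i\in S}|A_i(T)|$, so asserting that $G(\mathcal{C}_S)$ is an $(\alpha\epsilon^{-2},|S|,\alpha)$-expander is exactly asserting that every set $T$ of $k:=\alpha\epsilon^{-2}$ codewords has $\sum_{i\in S}|A_i(T)|\ge|S|k(1-\alpha)$, i.e.\ that no such $T$ has too many coordinate collisions. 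This is the kind of event zero-error list recovery controls. Before the main argument, note the size bound: Theorem~\ref{unbalanced:thm:main} gives a puncturing $\mathcal{C}_S$ of rate $\Omega(\epsilon/\log q)$, and since $\mathcal{C}_S$ has $|\mathcal{C}|$ codewords and block length $|S|$ its rate is $\log_q|\mathcal{C}|/|S|$, while the Singleton bound applied to $\mathcal{C}$ gives $\log_q|\mathcal{C}|\le(q^{-1}+\epsilon^2)n+1=O(\epsilon^2 n)$ (using $\epsilon>q^{-1/2}$); hence $|S|=O(\epsilon^2 n\log q/\epsilon)=O(\epsilon n\log q)$, as required.

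For the expansion itself, fix auxiliary parameters $\rho\in(0,1)$ and $\alpha'\in(0,1]$ and apply Theorem~\ref{unbalanced:thm:main} with error fraction $\rho$ and output-list parameter $\alpha'$, so that (on the same set $S$) $\mathcal{C}_S$ is $(\rho,\ell,\ell(1+\alpha'))$-list recoverable for any $\ell$ up to $\epsilon^{-2}\sigma^2\gamma$, with $\sigma=(1-\rho)(2-\rho)^{-1}$ and $\gamma=(1+\alpha')(1-\rho)^2-1$. Suppose, for contradiction, that $G(\mathcal{C}_S)$ is not an $(\alpha\epsilon^{-2},|S|,\alpha)$-expander, and fix $T$ with $|T|=k$ and $\sum_{i\in S}|A_i(T)|<|S|k(1-\alpha)$. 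Call a coordinate \emph{bad} if $|A_i(T)|>\ell$; since each bad coordinate contributes more than $\ell$ to that sum, there are fewer than $|S|k(1-\alpha)/\ell$ bad coordinates, which is at most $\rho|S|$ once $\ell\ge k(1-\alpha)/\rho$. Form lists $B_i$ by taking $B_i=A_i(T)$ on the good coordinates and $B_i$ an arbitrary $\ell$-subset of $[q]$ on the bad ones; then $|B_i|\le\ell$ for every $i$, and each $c\in T$ agrees with $B_i$ on all good coordinates, so $\Delta(c,\prod_{i\in S}B_i)\le\rho|S|$. Hence $(\rho,\ell,\ell(1+\alpha'))$-list recoverability forces $k\le\ell(1+\alpha')$, which contradicts $k=\alpha\epsilon^{-2}$ provided $\ell$ can be chosen with $k(1-\alpha)/\rho\le\ell<\alpha\epsilon^{-2}/(1+\alpha')$ and $\ell\le\epsilon^{-2}\sigma^2\gamma$.

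The real obstacle is making these constraints compatible: one needs $\rho,\alpha'$ with $(1-\alpha)(1+\alpha')<\rho<1-(1+\alpha')^{-1/2}$ and $k(1-\alpha)/\rho\le\epsilon^{-2}\sigma^2\gamma$, and this system is genuinely tight --- the crude padding of the bad coordinates is wasteful, and carrying it through as stated only seems to work when $\alpha$ is close to $1$. The robust way to obtain the corollary for the whole range $\alpha\in(0,1)$, and presumably what makes it ``immediate'', is to observe that the probabilistic argument proving Theorem~\ref{unbalanced:thm:main} bounds the relevant failure event directly: the existence of a set $T$ of $k$ codewords with $\sum_{i\in S}|A_i(T)|$ small is, exactly like the list-recovery failures treated in that proof, witnessed by a smaller configuration of the same ``birthday paradox'' type, so the same union bound shows that a random puncturing of the stated size is an $(\alpha\epsilon^{-2},|S|,\alpha)$-expander with high probability. (One should also verify the side condition $|\mathcal{C}_S|\ge|S|\,q$ from the definition of unbalanced expander, which holds for the Reed--Solomon codes motivating the statement and, more generally, whenever $\mathcal{C}$ is not too small.)
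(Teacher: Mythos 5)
The paper offers no argument for this corollary beyond the one-line assertion that zero-error list recoverability of $\mathcal{C}_S$ is ``equivalent'' to expansion of $G(\mathcal{C}_S)$, so there is no detailed proof to compare yours against. Your write-up is more careful than the paper's, and your diagnosis of the difficulty is correct: a failure of expansion at a set $T$ of $k$ codewords only bounds the \emph{sum} $\sum_{i\in S}|A_i(T)|<(1-\alpha)|S|k$, while list recoverability constrains the \emph{maximum} list size, and your device for bridging the two (declaring the oversized coordinates to be errors) forces $\rho>(1-\alpha)(1+\alpha')$, whereas Theorem~\ref{unbalanced:thm:main} caps $\rho<1-(1+\alpha')^{-1/2}<1-2^{-1/2}$ because it requires $\alpha'<1$. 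So, as you say yourself, the reduction you actually carry out covers only $\alpha$ close to $1$, not the full range $\alpha\in(0,1)$ claimed in the statement. (Your derivation of $|S|=O(\epsilon n\log q)$ from the Singleton bound is fine and matches the choice $m=\lceil 1280\epsilon^{-1}\log|\mathcal{C}|\rceil$ in the paper's proof of Theorem~\ref{unbalanced:thm:simple}.)

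The remaining gap is therefore genuine relative to the literal statement, though it is one the paper shares. Your proposed fix --- rerun the witness-compression argument of Theorem~\ref{unbalanced:thm:main} directly on the expansion-failure event --- is the right move and is essentially forced: if $|N(T)|<(1-\alpha)|S|k$ then $\sum_{i\in S}\bigl(k-|A_i(T)|\bigr)>\alpha|S|k$, and a Markov-type averaging gives $\Omega(\alpha|S|)$ coordinates each supporting $\Omega(\alpha k)$ disjoint colliding pairs from $T$, which is exactly the input (the set $S'$ and the pair families $P_i$) to the random-subset step of that proof; the union bound over small witnesses $\mathcal{C}'$ then goes through unchanged. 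But this is a re-proof, not a corollary, and it is not written out here; moreover the constants are genuinely tight, since with $k=\alpha\epsilon^{-2}$ the expected number of pairwise collisions of $T$ inside a random $S$ is already $\Theta(\binom{k}{2}\epsilon^2 m)=\Theta(\alpha k m)$, the same order as the deficiency $\alpha|S|k$ required for failure, so the expander parameter should really be $c\,\alpha\epsilon^{-2}$ for a small constant $c$ (or the statement read up to constants). In short: you have correctly identified why the corollary is not ``immediate,'' completed it only for $\alpha$ near $1$, and sketched but not executed the argument needed for general $\alpha$.
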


Instantiating the above theorem for degree-$d$ Reed-Solomon codes, we have $n = q$ and $\epsilon = (d/q)^{-\frac{1}{2}}$. This gives, $m = \tilde{O}(\sqrt{q})$.

\section{Proof of Theorem~\ref{unbalanced:thm:simple}}

The bulk of this section is the statement and proof of Theorem \ref{unbalanced:thm:main}.
After the proof of Theorem \ref{unbalanced:thm:main}, we show how to derive Theorem \ref{unbalanced:thm:simple} from it.

\subsection{A sketch of the proof}
\label{section:sketch}
Here, we sketch the proof when $\rho = 0$, i.e., for \emph{zero-error} list recovery. This contains most of the main ideas required for the general theorem. Let $S = \{s_1,\ldots, s_m\} \subseteq  [n]$ be a randomly chosen evaluation set. The main observation is that if there are input lists $A_1,\ldots,A_m \subseteq [q]$, such that $(A_1\times \cdots \times A_m)$ contains a large subset $\mathcal{D} \subseteq \mathcal{C}$ of codewords, then there is a small subset $\mathcal{C}'\subseteq \mathcal{D} \subseteq \mathcal{C}$ which agree on an unusually high number of coordinates.  An appropriately sized random subset of $\mathcal{D}$ does this. Thus the event that a given puncturing is bad is contained witnessed by the event that there are few codewords that agree a lot on the coordinates chosen in $S$. The number of events of the latter kind are far fewer in number, leaving us with fewer bad events to overcome for the union bound.

\subsection{Proof of Theorem~\ref{unbalanced:thm:simple}}

The calculations in the proof of Theorem \ref{unbalanced:thm:main} are all explicit, but we have not tried to optimize the constant terms.

\begin{theorem}
\label{unbalanced:thm:main}
Let $0 < \alpha < 1$ and $0 \leq \rho < 1 - (1+\alpha)^{-1/2}$ be real numbers.
Let $q,n,d,\ell,$ and $m$ be positive integers.
Let $\mathcal{C} \subset [q]^n$ be a code of distance at least $n - nq^{-1} - d$.
Denote $\gamma = (1+\alpha)(1-\rho)^2 - 1$ and $\sigma = (1-\rho)(2-\rho)^{-1}$.
Suppose that the following inequalities are satisfied:
\begin{align*}
    d &\geq n q^{-1}, \\
    4 \gamma^{-1} &\leq \ell \leq 800^{-1} \, \sigma \gamma n d^{-1}, \\
    \sigma m &\geq 1280 \sqrt{\ell \gamma^{-1}} \log |\mathcal{C}|, \\
    m &< n.
\end{align*}

Then, for $S \in \binom{[n]}{m}$ chosen uniformly at random, the probability that $\mathcal{C}_S$ is $(\rho,\ell,\ell(1 + \alpha))$-list recoverable is at least $1 - e^{-\sigma m / 64}$.
\end{theorem}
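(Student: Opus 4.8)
The plan is to bound the probability that $\mathcal{C}_S$ fails to be $(\rho,\ell,\ell(1+\alpha))$-list recoverable via a union bound over a cleverly chosen family of "witness" events, following the sketch in Section~\ref{section:sketch}. A failure means there exist input lists $A_1,\dots,A_m \subseteq [q]$ with $|A_i|\le \ell$ and a set $\mathcal{D}\subseteq \mathcal{C}_S$ of $L+1 = \ell(1+\alpha)+1$ codewords, each within relative distance $\rho$ of $A_1\times\cdots\times A_m$. The first step is the combinatorial core: show that whenever such a large $\mathcal{D}$ exists on the coordinate set $S$, there must be a pair (or small subset) of codewords of $\mathcal{D}$ that agree on an abnormally large number of coordinates of $S$. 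Concretely, each codeword in $\mathcal{D}$ uses, on at least $(1-\rho)m$ coordinates, a symbol lying in the corresponding list $A_i$; by a double-counting / pigeonhole argument (the "birthday paradox" flavor), among $L+1$ codewords spread across lists of size $\ell$ with $L$ comparable to $\ell$, two of them must collide — i.e. agree — on roughly $\gamma m /(\text{const})$ coordinates of $S$, where the gain over the trivial $m/q$ expectation is governed by $\gamma = (1+\alpha)(1-\rho)^2 - 1 > 0$. The exact threshold should come out as something like: there exist two distinct $c,c'\in\mathcal{C}$ agreeing on at least $\sigma\gamma m/(\text{const})$ coordinates of $S$, using $\sigma = (1-\rho)(2-\rho)^{-1}$ to convert the "both near a common list" condition into an "agree with each other" condition.

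**Next I would** estimate, for a fixed pair $c,c'\in\mathcal{C}$ with $\Delta(c,c') \ge n - nq^{-1} - d$, the probability over a random $S\in\binom{[n]}{m}$ that $c$ and $c'$ agree on at least $t := \sigma\gamma m/(\text{const})$ coordinates of $S$. Since $c,c'$ agree on at most $nq^{-1}+d$ of the $n$ coordinates, the number of agreements within $S$ is hypergeometrically distributed with mean at most $m(q^{-1} + d/n)$, which by the hypothesis $\ell \le 800^{-1}\sigma\gamma n d^{-1}$ and $\epsilon^2$-type bookkeeping is much smaller than $t$. A standard upper tail bound for the hypergeometric distribution (Chernoff/Hoeffding for sampling without replacement) then gives a bound of the form $\exp(-\Omega(t\log(t n/(m(d+nq^{-1}))))) $ or at least $\exp(-\Omega(t))$; here is where the factor $\sqrt{\ell\gamma^{-1}}$ in the hypothesis $\sigma m \ge 1280\sqrt{\ell\gamma^{-1}}\log|\mathcal{C}|$ enters, since we ultimately need $t$ — which is $\Theta(\sigma\gamma m/\ell)$ after accounting for how many coordinates the colliding pair needs relative to the list sizes — large enough that $t$ beats $2\log|\mathcal{C}|$ with room to spare, i.e. we can afford the union bound over all $\binom{|\mathcal{C}|}{2} \le |\mathcal{C}|^2$ pairs. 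Taking the union bound over all pairs and absorbing the $|\mathcal{C}|^2$ factor into the exponent leaves a failure probability at most $e^{-\sigma m/64}$, matching the claimed bound.

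**The main obstacle** I expect is getting the combinatorial reduction in the first step to yield a collision on enough coordinates — the quantitative heart of the argument. It is easy to see that two codewords among $\ell(1+\alpha)+1$ must collide somewhere beyond the trivial amount, but one needs the collision to be on order $\sigma\gamma m$ coordinates (up to constants) to make the tail bound strong enough to overcome the $\log|\mathcal{C}|$ union-bound cost after the $\sqrt{\ell/\gamma}$ slack is spent. The natural approach is an averaging argument: on the $\ge(1-\rho)m$ "good" coordinates of each codeword (where its symbol is in $A_i$), count ordered pairs (codeword, codeword) that share a symbol in $A_i$; since there are at most $\ell$ symbols available at coordinate $i$ and $L+1$ codewords, the number of same-list-symbol pairs at coordinate $i$ is at least $(L+1)^2/\ell - (L+1)$, and summing over the good coordinates and dividing by $\binom{L+1}{2}$ pairs shows the average pair agrees on at least roughly $((L+1)/\ell - 1)(1-\rho)^2 m \approx \gamma (1-\rho)^2 m /(1+\alpha)\cdot(\text{stuff})$ — wait, one must be careful: "both have a symbol in $A_i$" does not immediately mean "they agree at $i$", so the count must be refined to same-symbol pairs, costing another factor related to $\sigma$. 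Pinning down this chain of inequalities with the right constants, and confirming that the resulting $t$ feeds correctly into the hypergeometric tail bound to produce exactly $e^{-\sigma m/64}$, is the delicate part; the probabilistic tail estimate and the final union bound are comparatively routine.

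**Finally**, for the general $\rho > 0$ case one replaces "agree on $A_i$" with "agree on $A_i$ except on a $\rho$-fraction of coordinates," which is exactly why the parameter $\sigma = (1-\rho)(2-\rho)^{-1}$ appears: if two codewords are each $\rho$-close to the product of lists, then on at least a $(1-\rho)^2$-fraction of coordinates both lie in their respective lists, and a further averaging over the (at most $\ell$) list symbols shows they genuinely agree on at least a $\sigma$-related fraction of those coordinates. Deriving Theorem~\ref{unbalanced:thm:simple} from Theorem~\ref{unbalanced:thm:main} is then a matter of substituting $d = n\epsilon^2$ (so the distance hypothesis becomes $n(1-q^{-1}-\epsilon^2)$), checking that $\epsilon^{-2}\sigma^2\gamma$-sized input lists satisfy $4\gamma^{-1}\le \ell \le 800^{-1}\sigma\gamma n/d$, choosing $m = \Theta(\sqrt{\ell/\gamma}\,\sigma^{-1}\log|\mathcal{C}|) = \Theta(\epsilon^{-1}\log q)$ so that the punctured rate is $m^{-1}\cdot(\text{dimension}) = \Omega(\epsilon/\log q)$, and verifying $m < n$, which holds for $\epsilon > q^{-1/2}$ when $q,n$ are large.
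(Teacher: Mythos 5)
Your overall architecture --- reduce failure of list recoverability to the existence of a small witness with abnormally many agreements on $S$, then apply a hypergeometric tail bound and a union bound over witnesses --- is the right one, but the specific witness you commit to, namely a single \emph{pair} of codewords agreeing on $t=\Theta(\sigma\gamma m/\ell)$ coordinates of $S$ union-bounded over $|\mathcal{C}|^2$ pairs, does not close under the stated hypotheses. Your averaging argument is correct as far as it goes: among $\ell(1+\alpha)+1$ codewords constrained by lists of size $\ell$, some pair agrees on $\Omega(\sigma\gamma m/\ell)$ coordinates of $S$. But the union bound over pairs requires $t\gtrsim\log|\mathcal{C}|$, i.e.\ $\sigma m\gtrsim(\ell/\gamma)\log|\mathcal{C}|$, whereas the theorem only assumes $\sigma m\geq 1280\sqrt{\ell\gamma^{-1}}\log|\mathcal{C}|$; substituting the actual hypothesis into your $t$ gives $t=\Theta\bigl(\sqrt{\gamma/\ell}\,\log|\mathcal{C}|\bigr)\ll\log|\mathcal{C}|$ once $\ell\gg\gamma$. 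The logarithmic strengthening of the hypergeometric tail cannot rescue this, because the hypothesis $\ell\le 800^{-1}\sigma\gamma nd^{-1}$ only guarantees that $t$ is a \emph{constant} multiple of the mean $\Theta(md/n)$. Carrying your plan through would force $m$ to grow by an extra factor of $\sqrt{\ell/\gamma}$, yielding rate $O(1/\ell)$ --- no better than the Johnson bound, which is exactly the loss the paper attributes to the ``straightforward'' adaptation of the Rudra--Wootters method.

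The missing idea is a witness of intermediate size. The paper samples $\mathcal{C}'\subseteq\mathcal{D}$ by including each codeword independently with probability $p=(\gamma\ell/2)^{-1/2}\ell(1+\alpha)|\mathcal{D}|^{-1}$, so that $\mathbf{E}|\mathcal{C}'|=O(\sqrt{\ell/\gamma})$, and measures agreement by $T(\mathcal{C}')$, the set of coordinates where \emph{some} pair of $\mathcal{C}'$ agrees. Each good coordinate $i\in S'$ carries at least $\gamma\ell/2$ disjoint colliding pairs of $\mathcal{D}_i$, and $p$ is tuned so that at least one such pair survives the sampling with probability greater than $1/2$; a first-moment argument then yields a fixed $\mathcal{C}'$ with $|\mathcal{C}'|\le 10\sqrt{\ell/\gamma}$ and $|T(\mathcal{C}')\cap S|\ge\sigma m/4$. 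Now the deviation event has exponent $\Omega(\sigma m)$ --- a constant fraction of $m$, since globally $|T(\mathcal{C}')|<d|\mathcal{C}'|^2$ by the distance bound, so $\mathbf{E}|T(\mathcal{C}')\cap S|\le\sigma m/8$ --- while the union bound costs only $|\mathcal{C}|^{O(\sqrt{\ell/\gamma})}$, and these balance precisely under $\sigma m\ge 1280\sqrt{\ell\gamma^{-1}}\log|\mathcal{C}|$. Your parenthetical ``(or small subset)'' gestures at this, but the quantitative heart of the proof is choosing the witness size to be $\sqrt{\ell/\gamma}$ and tracking the union $T(\mathcal{C}')$ of pairwise agreement sets rather than a single pair; without that step the argument does not meet the stated parameters.
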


\begin{proof}
	For any $\mathcal{C}' \subseteq \mathcal{C}$, denote by $T(\mathcal{C}')$
	the set of coordinates $i \in [n]$ such that there is a pair $c_1,c_2 \in \mathcal{C}'$ with $c_1[i] = c_2[i]$. 
	
	The basic outline of the proof is to first show that, for any $S$ such that $\mathcal{C}_S$ is not $(\rho, \ell, \ell(1 + \alpha))$-list recoverable, there is a pair $S',\mathcal{C}'$ such that $S'$ is large and $|T(\mathcal{C'}) \cap S'|$ is unusually large. Taking a union bound over all candidates for $\mathcal{C}'$ then shows that there cannot be too many pairs of this sort. 
	
	Let $S \in \binom{[n]}{m}$ so that $\mathcal{C}_{S}$ is not $(\rho, \ell, \ell(1 + \alpha))$-list recoverable.
	We will show that there is a set $\mathcal{C}' \subset \mathcal{C}_S$ such that
	\begin{align}
	    \label{unbalanced:eq:Cbound} |\mathcal{C}'| &\leq 10 \sqrt{\ell/ \gamma}, \text{ and} \\
	    \label{unbalanced:eq:Tbound} |T(\mathcal{C}') \cap S| &\geq \sigma m/4.
	\end{align}
	
	Since $\mathcal{C}_S$ is not $(\rho, \ell, \ell(1 + \alpha))$-list recoverable, there are subsets $A_i \subseteq [q]$ for each $i \in S$ such that each $|A_i| \leq \ell$ and 
	$|\{c \in \mathcal{C}_S : \Delta(c, \prod_{i \in S} A_i) \leq \rho n \}| > \ell(1 + \alpha)$.
	
	Let 
	\[\mathcal{D} = \{c \in \mathcal{C}_s : \Delta (c, \prod_{i \in S} A_i  ) \leq \rho n \}.\]
	For $i \in S$, let
	\[\mathcal{D}_i = \{c \in \mathcal{D} : c[i] \in A_i\}. \]
	Let
	\[I = \{(c,i) \in \mathcal{D} \times S : c \in \mathcal{D}_i \}. \]
	From the definition of $\mathcal{D}$, we have
	\begin{equation}\label{eq:lowerBoundI} |I| \geq |\mathcal{D}| (1-\rho) m. \end{equation}
	
	Note that the average cardinality of the $\mathcal{D}_i$ is $(1-\rho) |\mathcal{D}|$. Let
	\[S' = \{i \in S : |\mathcal{D}_i| \geq (1-\rho)^2 |\mathcal{D}|\}. \]
	
	If $\rho = 0$, then $\mathcal{D}_i = \mathcal{D}$ for each $i$, and hence $|S'| = m$.
	Next we show that, if $\rho > 0$, then $|S'| \geq (1-\rho)(2-\rho)^{-1} m = \sigma m$.
    Since $|\mathcal{D}_i \leq |\mathcal{D}|$ for each $i$, we have
    \begin{equation}\label{eq:lowerBoundSPrime} |S'| \, |\mathcal{D}| \geq \sum_{i \in S'} |D_i| = |I| - \sum_{i \in S \setminus S'} \mathcal{D}_i. \end{equation}
    Since $|D_i| < (1- \rho)^2|\mathcal{D}|$ for each $i \in S \setminus S'$, we have
    \begin{equation}\label{eq:upperBoundSPrimeComplement} \sum_{i \in S \setminus S'} \leq (m - |S'|)(1-\rho)^2|\mathcal{D}|. \end{equation}
    
    A straightforward rearrangement of (\ref{eq:lowerBoundI}), (\ref{eq:lowerBoundSPrime}), and (\ref{eq:upperBoundSPrimeComplement}) using the assumption that $\rho > 0$ leads to the claimed lower bound on $|S'|$:
    \begin{equation}\label{eq:claimedBoundOnSPrime} |S'| \geq \sigma m. \end{equation}
    Since $\sigma < 1$, the bound $|S'| \geq \sigma m$ holds for the case $\rho = 0$ as well.
	
	For each $i \in S'$, choose a set $P_i \subset \binom{\mathcal{D}}{2}$ of $|P_i| \geq \gamma \ell/2$ disjoint pairs of codewords in $\mathcal{D}_i$ such that for each $\{c_1,c_2\} \in \mathcal{P}_i$, we have $c_1[i] = c_2[i]$. This is always possible since $|A_i| \leq \ell$ and $|\mathcal{D}_i| \geq (1+\rho)^2 |\mathcal{D}| \geq (1+\gamma)\ell$. Now choose $\mathcal{C'}$ randomly by including each element of $\mathcal{D}$ with probability $p = (\gamma \ell / 2)^{-1/2} \ell(1+\alpha) |\mathcal{D}|^{-1}$.
	Since $\ell \geq 4 \gamma^{-1}$ by hypothesis and $|\mathcal{D}| \geq \ell(1+\alpha)$ by the assumption that $\mathcal{C}_S$ is not $(\rho, \ell, \ell(1 + \alpha))$-list recoverable, we have $p < 1$.
	The expected size of $\mathcal{C}'$ is 
	\begin{equation*}
	\mathbf{E}[|\mathcal{C}'|] = p|\mathcal{D}| \leq (\gamma/ (2\ell))^{-1/2} (1 + \alpha) \leq (8\ell/\gamma)^{1/2}.
	\end{equation*}
	
	We remark that this is the only place where we use the assumption that $\alpha < 1$. For any fixed pair $c_1 \neq c_2$ of codewords in $\mathcal{D}$, the probability that both are included in $\mathcal{C}'$ is $p^2$.
	Since the pairs in $P_i$ are disjoint, the events that two distinct pairs $\{c_1, c_2\}, \{c_3, c_4\} \in P_i$ are both included in $\mathcal{C}'$ are independent.
	Hence, the probability that no pair in $P_i$ is included in $\mathcal{C}'$ is $(1-p^2)^{|P_i|} < e^{-p^2|P_i|} < 1/2$.
	Consequently, for each fixed $i \in S'$, the probability that $i \in T(\mathcal{C}')$ is greater than $1/2$.
	By linearity of expectation, $\mathbf{E}[|T(\mathcal{C}') \cap S'|] \geq |S'|/2 \geq \sigma m/2$.
	
	Let 
	\[ Y = |T(\mathcal{C}') \cap S'| - \frac{\sigma m}{4}\frac{|\mathcal{C}'|}{\mathbf{E}[|\mathcal{C}'|]}.\]
	By linearity of expectation, $\mathbf{E}[Y] \geq \sigma m/4$, hence there is some specific choice of $\mathcal{C}'$ for which $Y \geq \sigma m/4$.
	This can hold only if $|T(\mathcal{C}') \cap S| \geq |T(\mathcal{C}') \cap S'| \geq m/4$ and $|\mathcal{C}'| \leq 3 \mathbf{E}(|\mathcal{C}'|)$ simultaneously, which establishes (\ref{unbalanced:eq:Cbound}) and (\ref{unbalanced:eq:Tbound}). 
	
	Next we bound the probability that, for a fixed choice of $\mathcal{C}'$ and random $S$, we have have $|T(C') \cap S|$ large.
    Let $\mathcal{C}' \subset \mathcal{C}$ be an arbitrary set of $|\mathcal{C}'| \leq 10 \ell^{1/2} \gamma^{-1/2}$ codewords.
    Since the distance of $\mathcal{C}'$ is at least $n-nq^{-1} - d$ and $d \geq nq^{-1}$, we have
    \begin{equation}
    \label{unbalanced:eqn:exp}
    |T(\mathcal{C}')| \leq (nq^{-1} + d)\binom{|\mathcal{C}'|}{2} < d |\mathcal{C}'|^2. 
    \end{equation}
    
    For $S \in \binom{[n]}{m}$ chosen uniformly at random, $|T(\mathcal{C}') \cap S|$ follows a hypergeometric distribution.
    Specifically, we are making $m$ draws from a population size of $n$ of which $|T(\mathcal{C}')| \leq d |\mathcal{C}'|^2$ contribute to $|T(\mathcal{C}') \cap S|$.
    Using the assumption that $\ell \leq \gamma \sigma n (800 d)^{-1}$, the expected value of $|T(\mathcal{C}') \cap S|$ is
    \begin{equation}\label{eqn:expectationTCcapS}
        \mathbf{E}\left [|T(\mathcal{C}') \cap S| \right] \leq d |\mathcal{C}'|^2n^{-1}m \leq 100 \frac{d \ell} {\gamma n} m \leq \frac{\sigma m}{8}.  
    \end{equation}
    Next we use the following large deviation inequality for hypergeometric random variables (see~\cite{DP09}). Let $X$ be a hypergeometric random variable with mean $\mu$. Then for any $\alpha \geq 1$, 
    \begin{equation}
    \label{eqn:chernoff:hyp}
    \mathbb{P}(X \geq (1 + \alpha) \mu) \leq \exp(-\alpha \mu/4).
    \end{equation}
    Together with (\ref{eqn:expectationTCcapS}), this gives
    \begin{equation}
        \mathbb{P} (|T(\mathcal{C}') \cap S| \geq \sigma m/4) \leq  \exp \left (-\frac{\sigma m}{32} \right ).
    \end{equation}
    
    Finally, we take a union over all candidates for $\mathcal{C}'$.
    Let $X$ be the event that $\mathcal{C}_S$ is not $(\ell,\alpha,\rho)$ list recoverable, with $S \in \binom{[n]}{m}$ uniformly at random.
    Using the assumption that $\sigma m \geq 1280 \sqrt{\ell/\gamma} \log |\mathcal{C}|$, we have
    \begin{align*}
        \mathbb{P}(X) &\leq \sum_{\mathcal{C}' \subset \mathcal{C}_S: |\mathcal{C'}| \leq 10 \sqrt{\ell/ \gamma}} \mathbb{P}(|T(\mathcal{C}' \cap S)| \geq \sigma m/4) \\
        &\leq \binom{|\mathcal{C}|}{\lceil 10 \sqrt{\ell/\gamma} \rceil + 1} \exp \left (-\frac{m}{32} \right ) \\
        &< \exp \left (20 \sqrt{\ell/\gamma} \log |\mathcal{C}|  - \sigma m/32 \right ) \\
        &\leq \exp (-\sigma m/64),
    \end{align*}
    as claimed.
\end{proof}

We now show how to derive Theorem \ref{unbalanced:thm:simple} from Theorem \ref{unbalanced:thm:main}.

\begin{proof}[Proof of Theorem \ref{unbalanced:thm:simple}]
Suppose we have $\alpha, \rho, n, q,$ and $\epsilon$ as in the hypotheses of Theorem \ref{unbalanced:thm:simple}.
Let $\gamma = (1+\alpha)(1-\rho)^2 - 1$, $\sigma = (1-\rho)(2-\rho)^{-1}$ and $m = \lceil 1280 \epsilon^{-1} \log |C| \rceil$.
The singleton bound combined with the assumption that $\epsilon < c$ for a suitably chosen absolute constant $c$ implies that $m < n$.
Choose $S \in \binom{[n]}{m}$ uniformly at random.
The rate of $\mathcal{C}_S$ is
\[R= \log |\mathcal{C}| (m \log{q})^{-1} = \Omega(\epsilon (\log q)^{-1}).\]
It is straightforward to check that the hypotheses of Theorem \ref{unbalanced:thm:main} are satisfied if we take $\ell=\epsilon^{-2}\sigma^2\gamma$, and hence we have that $\mathcal{C}_S$ is $(\rho, \ell, \ell(1 +\alpha))$-list recoverable with high probability.
\end{proof}

\section{Upper bound}

Here we show the aforementioned upper bound for the rate to which a degree-$d$ Reed-Solomon code over $\mathbb{F}_q$ can be randomly punctured to be $(Omega(q), 1/2)$-zero-error list-recoverable. 

First, we recall a bit of standard and relevant sumset notation. For a group $G$ and subsets $A,B \subseteq G$, we denote the sumset $A+B = \{a + b ~|~ a\in A,~b \in B\}$. Clearly, we have $|A+B| \leq |A|\cdot |B|$. If $G = \mathbb{Z}_p$, then for $n < p/2$, we have that $[n] + [n] = \{2,\ldots,2n\}$. We are now ready to state and prove the upper bound.

\begin{theorem}
\label{unbalanced:thm:upperbound}
Let $m = o(\log q)$, and $S$ be a uniformly random subset of $\mathbb{F}_q$ of size $m$ where $q$ is a large prime. Then for every $d \geq 1$, the degree-$d$ Reed-Solomon code with the evaluation set at $S$ is, with high probability, not $(\Omega(q), 1/2)$-zero-error list-recoverable.
\end{theorem}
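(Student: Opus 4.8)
The plan is to prove something slightly stronger than what is stated: that for \emph{every} evaluation set $S\subseteq\mathbb F_q$ with $|S|=m$ (so the ``with high probability'' conclusion is automatic) the punctured code $\mathcal{C}_S$ fails the expansion/list-recovery condition, by exhibiting a single ``efficient box''. Concretely, I will find sets $A_1,\dots,A_m\subseteq\mathbb F_q$ with each $|A_i|=\ell$ for a suitable $\ell=o(q)$ such that $A_1\times\cdots\times A_m$ contains at least $q$ codewords of $\mathcal{C}_S$. Taking any $q$ of those codewords as a subset $\Lambda$ of the left side of $G(\mathcal{C}_S)$, we get $|N(\Lambda)|\le\sum_i|A_i|=m\ell=o(mq)<\tfrac12 m|\Lambda|$, so $G(\mathcal{C}_S)$ is not a $(q,m,1/2)$-unbalanced expander; equivalently $\mathcal{C}_S$ is not $(\ell,q-1)$-zero-error list recoverable, and since enlarging the $A_i$ only adds codewords, the failure persists with input lists of size $\Omega(q)$. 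Thus the whole proof reduces to producing the box.

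To produce it I would run a first-moment argument over \emph{translates of an interval}. Write $[\ell]=\{1,\dots,\ell\}\subseteq\mathbb F_q$, and for a shift vector $c=(c_1,\dots,c_m)\in\mathbb F_q^m$ consider the box $\prod_i([\ell]+c_i)$; each $[\ell]+c_i$ has size exactly $\ell$. For a fixed codeword $w$ and uniform random $c$, the events $w_i\in[\ell]+c_i$ are independent of probability $\ell/q$, so
\[
\mathbf{E}_c\!\left[\,\#\Big\{w\in\mathcal{C}_S:\ w\in\prod_{i=1}^m\big([\ell]+c_i\big)\Big\}\,\right]=|\mathcal{C}_S|\,(\ell/q)^m .
\]
Let $d'=\min(d,m-1)$. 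Then $|\mathcal{C}_S|\ge q^{d'+1}$: when $d\le m-1$ the evaluation map on polynomials of degree $\le d$ is injective, since a nonzero such polynomial has fewer than $m$ roots, giving $|\mathcal{C}_S|=q^{d+1}$; when $d\ge m-1$, interpolation gives $\mathcal{C}_S=\mathbb F_q^m$. Choosing $\ell=\lceil q^{(m-d')/m}\rceil$ makes the expectation at least $q^{d'+1-m}\cdot q^{m-d'}=q$, so some translate box contains $\ge q$ codewords. (The elementary sumset bookkeeping recalled before the theorem, e.g. $[\ell]+[\ell]=\{2,\dots,2\ell\}$, is convenient here if one prefers to phrase the box with a common interval and absorb shifts, but it is not essential.)

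It then remains to check $\ell=o(q)$, and this is the one place the hypothesis $m=o(\log q)$ (with $m\to\infty$) is actually used — it is the quantitative heart of the statement and is what makes the bound tight against Theorem~\ref{unbalanced:thm:main}. Since $d\ge1$ and $m\ge2$ force $d'\ge1$, we have $\ell\le q^{(m-d')/m}+1= q\cdot q^{-d'/m}+1\le q\cdot q^{-1/m}+1$, and $q^{1/m}=e^{(\log q)/m}\to\infty$ exactly when $m=o(\log q)$, so $\ell<q/2$ for $q$ large; hence $\sum_i|A_i|=m\ell<mq/2$, which is what the expansion failure needed.

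The step I expect to demand the most care is not any individual inequality but rather (i) nailing down precisely what ``$(\Omega(q),1/2)$-zero-error list recoverable'' means and checking that one efficient box of width $o(q)$ refutes it \emph{at the scale} $k=\Omega(q)$ — i.e.\ tracking the equivalence with the unbalanced-expander definition carefully; and (ii) the regime split $d\le m-2$ versus $d\ge m-1$ in the bound $|\mathcal{C}_S|\ge q^{d'+1}$, together with the genuinely degenerate small cases ($m=1$, or $d$ comparable to or larger than $m$), where one must verify $\ell$ lands in $[2,q/2)$ and the box is nontrivial. Past that, everything is a short first-moment computation.
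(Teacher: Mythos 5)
Your proposal is correct, and it takes a genuinely different route from the paper. The paper's proof is constructive and algebraic: it takes the lists at the two special coordinates $s_0=0,s_1=1$ to be generalized arithmetic progressions (sumsets $A_0=\sum_j \frac{1}{1-s_j}[t]$, $A_1=\sum_j\frac{1}{s_j}[t]$), lets $\mathcal{D}$ be all degree-one codewords (lines) through $\{0\}\times A_0$ and $\{1\}\times A_1$, and uses the sumset structure to show that the projection of $\mathcal{D}$ onto every other coordinate is again a small sumset of size $\le 2t^{2m-5}$ while $|\mathcal{D}|=\Omega(t^{2m-4})$. That argument genuinely uses the randomness of $S$ (a first-moment bound on collisions to show $|A_0|,|A_1|=\Omega(t^{m-2})$) and the algebraic structure of Reed--Solomon codewords, and in return it shows a superlinear input-to-output blowup $L\ge\ell^{1+1/(2m)}$ even for input lists of size $\ell$ far below $q^{1-1/m}$ (any $t$ with $t^m=o(\sqrt q)$). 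Your argument is instead a generic averaging over translated boxes: it uses nothing about Reed--Solomon beyond $|\mathcal{C}_S|\ge q^{d'+1}\ge q^2$, works deterministically for \emph{every} evaluation set (so the ``with high probability'' is vacuous), and produces the failure directly at scale $\Omega(q)$ --- a set of $q$ codewords whose neighborhood in $G(\mathcal{C}_S)$ has size $m\ell=o(mq)$. That last point is worth emphasizing: the paper's box contains only $\Omega(t^{2m-4})=o(q)$ codewords, so under the expander reading of ``$(\Omega(q),1/2)$'' your construction actually hits the advertised scale more squarely than the paper's does, whereas the paper's construction carries extra information (failure already for small input lists) that a pure counting argument cannot give. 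Your two flagged caveats are the right ones and are genuinely present in the paper itself: the statement never pins down what ``$(\Omega(q),1/2)$-zero-error list-recoverable'' means formally (the paper's own proof ends by exhibiting lists of size $\ell$ with $\ell^{1+1/(2m)}$ codewords and stops there), and the degenerate cases ($m$ bounded, $m=1$) must be excluded for either proof to make sense. Neither caveat is a gap in your argument.
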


\begin{proof}
Let $S = \{s_0,\ldots, s_{m}\}$. Let $t$ be a large number such that $t^m =o (\sqrt{q})$. We are using the fact that $m = o(\log q)$ for the existence of such a $t$. W.L.O.G assume $s_0 = 0$ and $s_1 = 1$ (if $0,1 \not \in S$, then adding them to $S$ only makes the lower bound stronger). Consider the two sets 
\[
A_0 = \frac{1}{1 - s_2}[t] + \cdots \frac{1}{1 - s_{m-1}}[t]
\]
and
\[
A_1 = \frac{1}{s_2}[t] + \cdots \frac{1}{s_{m-1}}[t].
\]

\begin{claim}
With high probability over the choice of $S$, we have that $|A_0|,|A_1| = \Omega(\left(t^{m - 2}\right))$.
\end{claim}

\begin{proof}
We do the proof for $A_0$, the case for $A_1$ follows analogously. Let $P$ be the set of ``collisions'' in $A_0$. Formally:
\[
P := \left\{\left(a_2,\ldots,a_{m-2}, b_2,\ldots,b_{m-2}\right)~|~ \sum_{i = 2}^{m-2}a_is_i = \sum_{i = 2}^{m-2}b_is_i\right\}.
\]
So the number of distinct elements in $A_0$ is at least $t^{m - 2} - |P|$. We observe that 
\begin{align*}
\mathbf{E}[|P|] & = \sum_{\substack{{a_2,\ldots,a_{m -2}\in [t]} \\{b_{2},\ldots ,b_{m-2} \in [t]}}}\mathbb{P}\left(\sum_{i =2}^{m-2}a_is_i = \sum_{i =2}^{m-2}b_is_i\right) \\
& \leq \frac{1}{p}t^{2m - 4} \\
& = o(t^{m-2}).
\end{align*}
So by Markov's Inequality, with high probability, $|A_0| \sim t^{m - 2}$.
\end{proof}

Consider $\mathcal{D}$, the set of degree-$1$ Reed-Solomon codes given by the lines 
\[
\{Y = aX + b\}_{b \in A_0,a \in A_1}.
\]

First, we note that $|Y| = \Omega(t^{2m - 4}) $. Geometrically, $\mathcal{D}$ is just the set of all lines passing through some point of $\{0 \} \times A_0$ and $\{1\}\times A_1$. Clearly, $\{c[0]~|~ c \in \mathcal{C}\} = A_0$ and $\{c[1]~|~ c \in \mathcal{D}\} = A_1$. For $i \neq 0,1$, let us similarly define $A_i :=\{c[s_i]~|~c \in \mathcal{D}\}$.We have that 
\begin{align*}
A_i & = \{a(1 - s_i) + bs_i\}_{b \in A_0,a \in A_1} \\
& = (1 - s_i)\left(\frac{1}{1 - s_2}[t] + \cdots \frac{1}{1 - s_{m-1}}[t]\right) + s_i \left( \frac{1}{s_2}[t] + \cdots \frac{1}{s_{m-1}}[t]\right) \\
&= \left([t] +  \sum_{2 \leq j \leq m,~j \neq i}\frac{1 - s_i}{1 - s_j}[t]\right) + \left([t] + \sum_{2 \leq j \leq m,~j \neq i}\frac{s_i}{s_j}[t] \right) \\
& = \{2,\ldots, 2t\} + \sum_{2 \leq j \leq m,~j \neq i}\frac{1 - s_i}{1 - s_j}[t]+ \sum_{2 \leq j \leq m,~j \neq i}\frac{s_i}{s_j}[t].
\end{align*}

Thus, $|A_i| \leq (2t) \times t^{2m - 6} \leq 2t^{2m-5}$. 

This shows that there are lists $A_0, A_1,\ldots, A_m$ each of size at most $\ell := 2t^{2m - 5}$ such that there are at least  $\Omega(t^{2m - 4}) = \ell^{1 + \frac{1}{2m}}$ codewords, namely $\mathcal{D}$, contained in $A_0 \times \cdots \times A_m$.
\end{proof}

For a fixed $d$, the above theorem rules out hope of randomly puncturing degree-$d$ Reed-Solomon codes to rate $\omega\left(\frac{1}{\log q}\right)$ for the desired list recoverability. We believe that this is essentially the barrier. We state the concrete conjecture that we alluded to in Section~\ref{unbalanced:sec:motivation}.

\begin{conjecture}
\label{unbalanced:conj:punc}
The degree-$d$ Reed-Solomon code with evaluation set $\mathbb{F}_q$ can be randomly punctured to rate $\Omega_d\left(\frac{1}{\log q}\right)$ so that is it $(\Omega(q), 1/2)$-list recoverable with high probability.
\end{conjecture}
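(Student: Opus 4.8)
The plan is to cast Conjecture~\ref{unbalanced:conj:punc} as a union bound, exactly in the style of the proof of Theorem~\ref{unbalanced:thm:main}, but over a far smaller class of ``bad'' configurations, one that is forced to be small by the algebra of Reed--Solomon codes. Fix the constant $d$ and identify each codeword with its polynomial $p$, so the code is identified with $\mathbb{F}_q^{d+1}$ and evaluation $p\mapsto p(s)$ at a point $s\in\mathbb{F}_q$ is a linear functional. For a set $\mathcal{D}$ of such polynomials and a point $s$, write $\mathcal{D}(s)=\{p(s):p\in\mathcal{D}\}$ and $c(\mathcal{D},s)=|\mathcal{D}|-|\mathcal{D}(s)|$; note that $c(\mathcal{D},s)$ is at most the number of pairs in $\binom{\mathcal{D}}{2}$ that agree at $s$. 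By the equivalence between zero-error list recoverability of $\mathcal{C}_S$ and expansion of $G(\mathcal{C}_S)$ recalled in Section~\ref{unbalanced:sec:motivation}, it is enough to fix a small constant $\delta=\delta(d)>0$ and to prove: for $m=C_d\log q$ with $C_d$ a large enough constant and $S\subseteq\mathbb{F}_q$ a uniformly random $m$-subset, with high probability every $\mathcal{D}$ with $|\mathcal{D}|=\lceil\delta q\rceil$ satisfies $\sum_{s\in S}|\mathcal{D}(s)|\ge m|\mathcal{D}|/2$, equivalently $\sum_{s\in S}c(\mathcal{D},s)\le m|\mathcal{D}|/2$.

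The only easy ingredient is the bound for a single fixed $\mathcal{D}$. Since two distinct polynomials of degree at most $d$ agree at at most $d$ points, $\sum_{s\in\mathbb{F}_q}c(\mathcal{D},s)\le d\binom{|\mathcal{D}|}{2}$, which for $\delta$ small (say $\delta\le 1/(16d)$) makes the average of $c(\mathcal{D},s)$ over $s\in\mathbb{F}_q$ at most $|\mathcal{D}|/16$. As $0\le c(\mathcal{D},s)\le|\mathcal{D}|$ and $\sum_{s\in S}c(\mathcal{D},s)$ is a sum over a uniformly random $m$-subset of $\mathbb{F}_q$, a Hoeffding-type bound for sampling without replacement (or the hypergeometric estimate~(\ref{eqn:chernoff:hyp}) applied to a suitably normalized statistic) shows that for each fixed $\mathcal{D}$ the probability that $\sum_{s\in S}c(\mathcal{D},s)$ exceeds $\tfrac18 m|\mathcal{D}|$ is at most $e^{-\Omega(m)}=q^{-\Omega(C_d)}$. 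If the number of relevant $\mathcal{D}$ were only $q^{O_d(1)}$ this would finish the proof. The obstacle is that there are $\binom{q^{d+1}}{\lceil\delta q\rceil}=2^{\Theta_d(q\log q)}$ candidates, forcing the naive union bound to take $m=\Omega(q\log q)$; the size-reduction device of Theorem~\ref{unbalanced:thm:main} (replace a bad $\mathcal{D}$ by a random sub-collection of size $\Theta(\sqrt{|\mathcal{D}|})$ still exhibiting an unusual number of agreements) cuts the number of witnesses only to $2^{\tilde{\Theta}(\sqrt q)}$, which is exactly why that method stalls at $m=\tilde{\Omega}(\sqrt q)$ for Reed--Solomon codes (cf.\ the corollary following Theorem~\ref{unbalanced:thm:main}).

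To reach $m=O_d(\log q)$ one must use that a \emph{bad} $\mathcal{D}$ is far from arbitrary: it has many colliding pairs at each of the $m$ points of $S$, i.e.\ it is ``additively structured'' with respect to each of the $m$ evaluation functionals $p\mapsto p(s)$. The plan is therefore to prove a structure-and-counting theorem of the following shape: there is a family $\mathcal{Q}$ of at most $q^{O_d(1)}$ ``model sets'' in $\mathbb{F}_q^{d+1}$, each specified by $O_d(1)$ elements of $\mathbb{F}_q$ together with $O_d(1)$ integer lengths (bounded-rank generalized arithmetic progressions and their evaluation images, the sort of object behind the constructions in Theorems~\ref{unbalanced:thm:GR} and~\ref{unbalanced:thm:upperbound}), such that every $\mathcal{D}$ whose projections under $\Omega_d(\log q)$ generic evaluation functionals are all much smaller than $|\mathcal{D}|$ is contained in some $Q\in\mathcal{Q}$ with $|Q|=O_d(|\mathcal{D}|)$. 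Granting such a theorem, one union bounds over $\mathcal{Q}$ instead of over $\mathcal{D}$: for $\delta$ small each $Q$ has $|Q|\le q/(2d)$, so the easy ingredient (with $|Q|$ in place of $|\mathcal{D}|$) gives failure probability $e^{-\Omega(m)}$, and the extra $q^{O_d(1)}$ factor from the union over $Q\in\mathcal{Q}$ is absorbed once $C_d$ is large enough.

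The structural step is the crux, and it is exactly the point the authors flag as evading current techniques. Even for $d=1$ it asks for a Freiman / sum--product-type rigidity statement in $\mathbb{F}_q^2$ over a prime field: a large planar point set whose projections in $\Omega(\log q)$ generic directions are all much smaller than the set itself must be contained in a bounded-rank generalized arithmetic progression of comparable size, with the governing data of bounded complexity over $\mathbb{F}_q$. Standard tools (Freiman's theorem, Pl\"unnecke--Ruzsa, Bourgain--Glibichuk--Konyagin-type sum--product estimates) control sets of \emph{small doubling}, a much stronger hypothesis than ``small projections in a few directions,'' and they typically give control on the rank or size of the governing progression that is too weak for the polynomial bound on $|\mathcal{Q}|$ that the argument needs; closing both gaps is where the real work lies. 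Everything else --- the reformulation, the degree-$d$ agreement bound, the concentration inequality, and the final union-bound bookkeeping --- is routine and has close analogues already in the proof of Theorem~\ref{unbalanced:thm:main}.
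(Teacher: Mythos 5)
The statement you are addressing is Conjecture~\ref{unbalanced:conj:punc}, which the paper explicitly leaves open; there is no proof in the paper to compare against, so the only question is whether your argument actually closes the conjecture. It does not. Your write-up is a conditional reduction: everything hinges on the ``structure-and-counting theorem'' asserting that any $\mathcal{D}\subseteq\mathbb{F}_q^{d+1}$ of size $\Theta_d(q)$ whose images under $\Omega_d(\log q)$ evaluation functionals are all much smaller than $|\mathcal{D}|$ must live inside one of only $q^{O_d(1)}$ explicitly parametrized model sets of comparable size. You state this as something ``to be proved'' but give no proof, and you yourself concede that the available tools (Freiman, Pl\"unnecke--Ruzsa, sum--product estimates) assume small doubling rather than small projections in a few directions, and even under that stronger hypothesis do not deliver the polynomial bound on the number of model sets that your union bound requires. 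That missing theorem is not a technical lemma one can wave at: it is, essentially, the entire content of the conjecture. A reduction of an open problem to a harder-looking or equally open structural statement is not a proof, and the quantitative demands here (a $q^{O_d(1)}$-size cover of \emph{all} bad sets, with each cover element of size $O_d(|\mathcal{D}|)$ so the degree-$d$ agreement bound still applies) are exactly where any such argument would have to do real work.

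The surrounding scaffolding is fine and consistent with the paper's framework: the identification of zero-error list recoverability with expansion of $G(\mathcal{C}_S)$, the bound $\sum_{s\in\mathbb{F}_q}c(\mathcal{D},s)\le d\binom{|\mathcal{D}|}{2}$ from the degree-$d$ agreement count, the Hoeffding/hypergeometric concentration for a fixed $\mathcal{D}$, and the observation that the paper's witness-shrinking trick only reduces the union bound to $2^{\tilde{\Theta}(\sqrt q)}$ candidates and hence stalls at $m=\tilde{\Omega}(\sqrt q)$. But none of that is where the difficulty lies. As written, your proposal should be presented as a possible \emph{strategy} for attacking Conjecture~\ref{unbalanced:conj:punc} --- in particular, as an argument that the conjecture would follow from a Freiman-type rigidity statement for sets with small evaluation images --- not as a proof of it.
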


\section{Open problems}

The main open problem that we would like to showcase is Conjecture~\ref{unbalanced:conj:punc}. This was probably believed to be true but we could not find it written down explicitly in the literature. List recoverable codes have connections to various other combinatorial objects (see~\cite{VAD07}) and if true, Conjecture~\ref{unbalanced:conj:punc} could lead to the construction of some other interesting combinatorial objects. 

The second open problem is to derandomize Theorem~\ref{unbalanced:thm:simple}, i.e., to find an \emph{explicit} Reed-Solomon code which is list recoverable beyond the Johnson bound at least in the zero-error case. Understanding how these evaluation sets look like could lead to progress on Conjecture~\ref{unbalanced:conj:punc}, or could be interesting in its own right. 

Finally, the last open problem is that given a Reed-Solomon code $\mathcal{C} \subset [q]^m$ of rate $R$ on a randomly chosen evaluation set $S$, find an efficient algorithm for list recovery, i.e., take input lists $A_1,\ldots, A_{m}$ of size $O(R^{-2}(\log q)^{-1})$, and output all the codewords contained in $A_1\times \cdots \times A_{m}$ with high probability (over the choice of $S$ and the randomness used by the algorithm). This would also likely require some understanding of the properties of the evaluation set. 
\bibliography{references.bib}
\bibliographystyle{plain}

\end{document}